\newtheorem{tw}{Theorem}
\theoremstyle{definition}
\newtheorem{df}[tw]{Definition}
\newtheorem{ex}[tw]{Example}
\newtheorem{cor}[tw]{Corollary}
\begin{document}

\title{A Time-Scale Variational Approach\\
to Inflation, Unemployment and Social Loss\thanks{Part
of first author's Ph.D., which is carried out at the
University of Aveiro under the Doctoral Programme
\emph{Mathematics and Applications} 
of Universities of Aveiro and Minho.\newline
This is a preprint of a paper whose final and definitive form will appear 
in \emph{Control and Cybernetics}. Paper submitted 31-Oct-2012; 
revised 02-April-2013; accepted for publication 17-April-2013.}}

\author{Monika Dryl$^{1}$\\
\texttt{monikadryl@ua.pt}
\and
Agnieszka B. Malinowska$^{2}$\\
\texttt{a.malinowska@pb.edu.pl}
\and
Delfim F. M. Torres$^{1}$\\
\texttt{delfim@ua.pt}}

\date{$^1$CIDMA--Center for Research and Development in Mathematics and Applications,\\
Department of Mathematics, University of Aveiro, 3810-193 Aveiro, Portugal\\[0.3cm]
$^2$Faculty of Computer Science, Bialystok University of Technology,\\
15-351 Bia\l ystok, Poland}

\maketitle


\begin{abstract}

Both inflation and unemployment inflict social losses. When a tradeoff exists between the two,
what would be the best combination of inflation and unemployment?
A well known approach in economics to address this question
consists to write the social loss as a function of the rate of inflation $p$
and the rate of unemployment $u$, with different weights, and then, using known relations between $p$, $u$,
and the expected rate of inflation $\pi$, to rewrite the social loss function as a function of $\pi$.
The answer is achieved by applying the theory of the calculus of variations
in order to find an optimal path $\pi$ that minimizes the total social loss over a given time interval.
Economists dealing with this question use a continuous or a discrete variational problem.
Here we propose to use a time-scale model, unifying available results in the literature.
Moreover, the new formalism allow us to obtain new insights to the classical models
when applied to real data of inflation and unemployment.

\bigskip

\noindent \textbf{Keywords:} calculus on time scales; calculus of variations;
delta derivatives; dynamic model; inflation; unemployment.

\bigskip

\noindent \textbf{2010 Mathematics Subject Classification:} 34N05; 49K05; 91B50; 91B62.
\end{abstract}


\section{Introduction}

Time-scale calculus is a recent and exciting mathematical theory
that unifies two existing approaches to dynamic modelling --- difference and differential equations ---
into a general framework called dynamic models on time scales \cite{BohnerDEOTS,Hilger97,moz}.
Since a more general approach to dynamic modelling, it allows to consider more complex time domains,
such as $h\mathbb{Z}$, $q^{\mathbb{N}_0}$ or complex hybrid domains \cite{almeida:torres}.

Both inflation and unemployment inflict social losses. When a Phillips tradeoff exists between the two,
what would be the best combination of inflation and unemployment? A well-known approach
consists to write the social loss function as a function of the rate of inflation $p$
and the rate of unemployment $u$, with different weights;
then, using relations between $p$, $u$ and the expected rate of inflation $\pi$,
to rewrite the social loss function as a function of $\pi$;
finally, to apply the theory of the calculus of variations
in order to find an optimal path $\pi$ that minimizes
the total social loss over a certain time interval $[0,T]$ under study.
Economists dealing with this question implement the above approach using both
continuous and discrete models \cite{ChiangEDO,Taylor}.
Here we propose a new, more general, time-scale model.
We claim that such model describes better the reality.

We compare solutions to three models --- the continuous, the discrete, and the time-scale model with
$\mathbb{T}=h\mathbb{Z}$ --- using real data from the USA over a period of 11 years,
from 2000 to 2010. Our results show that the solutions to the classical continuous and discrete models
do not approximate well the reality. Therefore, while predicting the future, one cannot base
predictions on the two classical models only. The time-scale approach proposed here shows, however,
that the classical models are adequate if one uses an appropriate data sampling process.
Moreover, the proper times for data collection can be computed from the theory of time scales.

The paper is organized as follows. Section~\ref{prel} provides all the necessary definitions and results
of the delta-calculus on time scales, which will be used throughout the text. This section makes
the paper accessible to Economists with no previous contact with the time-scale calculus.
In Section~\ref{model} we present the economical model under our consideration, in continuous,
discrete, and time-scale settings. Section~\ref{main:results} contains our results.
Firstly, we derive in Section~\ref{main:theory} necessary (Theorem~\ref{cor1}
and Corollary~\ref{cor:ThZ}) and sufficient (Theorem~\ref{global}) optimality conditions
for the variational problem that models the economical situation. For the
time scale $\mathbb{T} = h\mathbb{Z}$ with appropriate values of $h$,
we obtain an explicit solution for the global minimizer
of the total social loss problem (Theorem~\ref{th:delf}). Secondly, we apply those conditions
to the model with real data of inflation \cite{rateinf} and unemployment
\cite{rateunemp} (Section~\ref{main:empirical}).
We end with Section~\ref{conclusions} of conclusions.


\section{Preliminaries}
\label{prel}

In this section we introduce basic definitions and theorems that will be useful in the sequel.
For more on the theory of time scales we refer to the gentle books
\cite{BohnerDEOTS,MBbook2001}. For general results on the calculus of variations on time scales
we refer the reader to \cite{Girejko,Malinowska,Martins} and references therein.

A time scale $\mathbb{T}$ is an arbitrary nonempty closed subset of $\mathbb{R}$.
Let $a,b\in\mathbb{T}$ with $a<b$. We define the interval $[a,b]$ in $\mathbb{T}$ by
$[a,b]_{\mathbb{T}}:=[a,b]\cap\mathbb{T}=\left\{t\in\mathbb{T}: a\leq t\leq b\right\}$.

\begin{df}[\cite{BohnerDEOTS}]
\label{def:jump:op}
The backward jump operator $\rho:\mathbb{T} \rightarrow \mathbb{T}$
is defined by $\rho(t):=\sup\lbrace s\in\mathbb{T}: s<t\rbrace$ for
$t\neq \inf\mathbb{T}$ and $\rho(\inf\mathbb{T}) := \inf\mathbb{T}$ if $\inf\mathbb{T}>-\infty$.
The forward jump operator $\sigma:\mathbb{T} \rightarrow \mathbb{T}$ is defined by
$\sigma(t):=\inf\lbrace s\in\mathbb{T}: s>t\rbrace$ for $t\neq \sup\mathbb{T}$
and $\sigma(\sup\mathbb{T}) := \sup\mathbb{T}$ if $\sup\mathbb{T}<+\infty$.
The backward graininess function $\nu:\mathbb{T} \rightarrow [0,\infty)$
is defined by $\nu(t):=t-\rho(t)$, while the forward graininess function
$\mu:\mathbb{T} \rightarrow [0,\infty)$ is defined by $\mu(t):=\sigma(t)-t$.
\end{df}

\begin{ex}
The two classical time scales are $\mathbb{R}$ and $\mathbb{Z}$,
representing the continuous and the purely discrete time, respectively.
The other example of interest to the present study is the periodic time scale
$h\mathbb{Z}$. It follows from Definition~\ref{def:jump:op} that
if $\mathbb{T}=\mathbb{R}$, then
$\sigma (t)=t$, $\rho(t)=t$, and $\mu(t) = 0$ for all $t \in \mathbb{T}$;
if $\mathbb{T}=h\mathbb{Z}$, then $\sigma(t)= t+h$, $\rho(t)= t-h$,
and $\mu(t) = h$ for all $t\in\mathbb{T}$.
\end{ex}

A point $t\in\mathbb{T}$ is called \emph{right-dense},
\emph{right-scattered}, \emph{left-dense} or \emph{left-scattered}
if $\sigma(t)=t$, $\sigma(t)>t$, $\rho(t)=t$,
and $\rho(t)<t$, respectively. We say that $t$ is \emph{isolated}
if $\rho(t)<t<\sigma(t)$, that $t$ is \emph{dense} if $\rho(t)=t=\sigma(t)$.


\subsection{The delta derivative and the delta integral}

We collect here the necessary theorems and properties
concerning differentiation and integration on a time scale.
To simplify the notation, we define $f^{\sigma}(t):=f(\sigma(t))$.
The delta derivative is defined for points in the set
$$
\mathbb{T}^{\kappa} :=
\begin{cases}
\mathbb{T}\setminus\left\{\sup\mathbb{T}\right\}
& \text{ if } \rho(\sup\mathbb{T})<\sup\mathbb{T}<\infty,\\
\mathbb{T}
& \hbox{ otherwise}.
\end{cases}
$$

\begin{df}[Section~1.1 of \cite{BohnerDEOTS}]
We say that a function $f:\mathbb{T}\rightarrow\mathbb{R}$ is
\emph{$\Delta$-differentiable} at
$t\in\mathbb{T}^\kappa$ if there is a number $f^{\Delta}(t)$
such that for all $\varepsilon>0$ there exists a neighborhood $O$
of $t$ such that
$$
|f^\sigma(t)-f(s)-f^{\Delta}(t)(\sigma(t)-s)|
\leq\varepsilon|\sigma(t)-s|
\quad \mbox{ for all $s\in O$}.
$$
We call to $f^{\Delta}(t)$ the \emph{$\Delta$-derivative} of $f$ at $t$.
\end{df}

\begin{tw}[Theorem~1.16 of \cite{BohnerDEOTS}]
\label{differentiation}
Let $f:\mathbb{T} \rightarrow \mathbb{R}$
and $t\in\mathbb{T}^{\kappa}$. The following holds:
\begin{enumerate}
\item
If $f$ is differentiable at $t$, then $f$ is continuous at $t$.

\item
If $f$ is continuous at $t$ and $t$ is right-scattered,
then $f$ is differentiable at $t$ with
$$
f^{\Delta}(t)=\frac{f^\sigma(t)-f(t)}{\mu(t)}.
$$

\item
If $t$ is right-dense, then $f$ is differentiable at $t$
if, and only if, the limit
$$
\lim\limits_{s\rightarrow t}\frac{f(t)-f(s)}{t-s}
$$
exists as a finite number. In this case,
$$
f^{\Delta}(t)=\lim\limits_{s\rightarrow t}\frac{f(t)-f(s)}{t-s}.
$$

\item
If $f$ is differentiable at $t$, then
$f^\sigma(t)=f(t)+\mu(t)f^{\Delta}(t)$.
\end{enumerate}
\end{tw}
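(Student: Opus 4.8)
The plan is to establish the four assertions in the order stated, since each later item relies on the earlier ones, and throughout to exploit the single elementary identity $\sigma(t)-s=\mu(t)+(t-s)$ together with the triangle inequality.

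For \textbf{(1)}, the naive approach — plug a nearby $s$ into the defining inequality — fails, because the resulting bound on $|f(t)-f(s)|$ does not tend to $0$ when $\mu(t)>0$. The fix I would use is to build the obstruction into the choice of tolerance: fix $\varepsilon\in(0,1)$, put $\varepsilon^{\ast}:=\varepsilon\bigl(1+|f^{\Delta}(t)|+2\mu(t)\bigr)^{-1}$, take the neighborhood $O$ of $t$ supplied by $\Delta$-differentiability for this $\varepsilon^{\ast}$, and shrink it so that $|t-s|<\varepsilon^{\ast}$ on $O$. Then apply the defining inequality \emph{twice}, once at a generic $s\in O$ and once at $s=t$, and subtract the two estimates. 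Because $\sigma(t)-s=\mu(t)+(t-s)$, the difference of the two bracketed quantities is exactly $f(t)-f(s)-f^{\Delta}(t)(t-s)$, so $|f(t)-f(s)|\le\varepsilon^{\ast}|\sigma(t)-s|+\varepsilon^{\ast}\mu(t)+|f^{\Delta}(t)|\,|t-s|$. Using $|\sigma(t)-s|\le\mu(t)+|t-s|<\mu(t)+\varepsilon^{\ast}$ and $\varepsilon^{\ast}<1$ collapses the right-hand side to $\varepsilon^{\ast}\bigl(\varepsilon^{\ast}+2\mu(t)+|f^{\Delta}(t)|\bigr)\le\varepsilon$, which is continuity at $t$.

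For \textbf{(2)} I would simply verify that $c:=\bigl(f^{\sigma}(t)-f(t)\bigr)/\mu(t)$ satisfies the definition (here $\mu(t)>0$ since $t$ is right-scattered). Again $\sigma(t)-s=\mu(t)+(t-s)$ reduces $f^{\sigma}(t)-f(s)-c(\sigma(t)-s)$ to $f(t)-f(s)-c(t-s)$, whose modulus is at most $|f(t)-f(s)|+|c|\,|t-s|$. Shrinking $O$ so that $|t-s|\le\mu(t)/2$ (hence $|\sigma(t)-s|\ge\mu(t)/2$) and, using continuity of $f$ at $t$ together with a bound on $|c|\,|t-s|$, so that $|f(t)-f(s)|+|c|\,|t-s|\le\tfrac{\varepsilon}{2}\mu(t)\le\varepsilon|\sigma(t)-s|$, shows $f^{\Delta}(t)$ exists and equals $c$. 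For \textbf{(3)}, right-density gives $\sigma(t)=t$ and $\mu(t)=0$, so the defining inequality becomes $|f(t)-f(s)-f^{\Delta}(t)(t-s)|\le\varepsilon|t-s|$ on $O$; this is vacuous at $s=t$ and, for $s\ne t$, literally says $\bigl|\tfrac{f(t)-f(s)}{t-s}-f^{\Delta}(t)\bigr|\le\varepsilon$, so $\Delta$-differentiability at a right-dense point is equivalent to the existence of a finite classical difference-quotient limit, and the two numbers agree.

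Finally, \textbf{(4)} is a corollary: if $t$ is right-dense then $\mu(t)=0$ and $f^{\sigma}(t)=f(t)=f(t)+\mu(t)f^{\Delta}(t)$; if $t$ is right-scattered, part (1) gives continuity of $f$ at $t$, so part (2) yields $f^{\Delta}(t)=\bigl(f^{\sigma}(t)-f(t)\bigr)/\mu(t)$, which rearranges to $f^{\sigma}(t)=f(t)+\mu(t)f^{\Delta}(t)$. The only genuinely delicate step is (1): one has to anticipate the $\mu(t)$-term and the $|f^{\Delta}(t)|$-term in advance when fixing $\varepsilon^{\ast}$, and to use the cancellation obtained by subtracting the defining inequality evaluated at $s=t$; the remaining parts are routine manipulations with $\sigma(t)-s=\mu(t)+(t-s)$.
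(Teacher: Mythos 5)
The paper does not prove this statement: it is quoted verbatim as Theorem~1.16 of Bohner--Peterson and used as a known preliminary. Your argument is correct and is essentially the standard proof from that reference — in particular the choice $\varepsilon^{\ast}=\varepsilon\bigl(1+|f^{\Delta}(t)|+2\mu(t)\bigr)^{-1}$ and the subtraction of the defining inequality at $s=t$ in part~(1), and the reduction via $\sigma(t)-s=\mu(t)+(t-s)$ in parts~(2) and~(3), are exactly the textbook steps. The only remark worth making is that part~(4) admits a one-line direct proof (set $s=t$ in the defining inequality to get $|f^{\sigma}(t)-f(t)-f^{\Delta}(t)\mu(t)|\le\varepsilon\mu(t)$ for every $\varepsilon>0$), which avoids routing through parts~(1) and~(2); your derivation is nonetheless valid.
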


\begin{ex}
If $\mathbb{T}=\mathbb{R}$, then item~3
of Theorem~\ref{differentiation} yields that
$f:\mathbb{R} \rightarrow \mathbb{R}$
is delta differentiable at $t\in\mathbb{R}$ if, and only if,
$$
f^\Delta(t)=\lim\limits_{s\rightarrow t}\frac{f(t)-f(s)}{t-s}
$$
exists, i.e., if, and only if, $f$ is differentiable (in the ordinary sense) at $t$:
$f^{\Delta}(t)=f'(t)$. If $\mathbb{T}=h\mathbb{Z}$, then point~2
of Theorem~\ref{differentiation} yields that
$f:\mathbb{Z} \rightarrow \mathbb{R}$ is delta differentiable
at $t\in h\mathbb{Z}$ if, and only if,
\begin{equation}
\label{eq:delta:der:h}
f^{\Delta}(t)=\frac{f(\sigma(t))-f(t)}{\mu(t)}=\frac{f(t+h)-f(t)}{h}.
\end{equation}
In the particular case $h=1$, $f^{\Delta}(t)=\Delta f(t)$,
where $\Delta$ is the usual forward difference operator.
\end{ex}

\begin{tw}[Theorem~1.20 of \cite{BohnerDEOTS}]
\label{tw:differpropdelta}
Assume $f,g:\mathbb{T}\rightarrow\mathbb{R}$
are $\Delta$-differentiable at $t\in\mathbb{T^{\kappa}}$. Then,
\begin{enumerate}
\item The sum $f+g:\mathbb{T}\rightarrow\mathbb{R}$ is
$\Delta$-differentiable at $t$ with
$(f+g)^{\Delta}(t)=f^{\Delta}(t)+g^{\Delta}(t)$.

\item
For any constant $\alpha$, $\alpha f:\mathbb{T}\rightarrow\mathbb{R}$
is $\Delta$-differentiable at $t$ with
$(\alpha f)^{\Delta}(t)=\alpha f^{\Delta}(t)$.

\item The product $fg:\mathbb{T} \rightarrow \mathbb{R}$
is $\Delta$-differentiable at $t$ with
\begin{equation*}
(fg)^{\Delta}(t)=f^{\Delta}(t)g(t)+f^{\sigma}(t)g^{\Delta}(t)
=f(t)g^{\Delta}(t)+f^{\Delta}(t)g^{\sigma}(t).
\end{equation*}

\item If $g(t)g^{\sigma}(t)\neq 0$,
then $f/g$ is $\Delta$-differentiable at $t$ with
\begin{equation*}
\left(\frac{f}{g}\right)^{\Delta}(t)
=\frac{f^{\Delta}(t)g(t)-f(t)g^{\Delta}(t)}{g(t)g^{\sigma}(t)}.
\end{equation*}
\end{enumerate}
\end{tw}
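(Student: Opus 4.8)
The plan is to verify the four rules in turn, reading each one off the definition of the $\Delta$-derivative and using the facts already collected in Theorem~\ref{differentiation}, in particular that $\Delta$-differentiability implies continuity (item~1). Items~1 and~2 are immediate: given $\varepsilon>0$, choose a neighbourhood $O$ of $t$ on which the defining inequality holds for $f^{\Delta}(t)$ with $\varepsilon/2$ and for $g^{\Delta}(t)$ with $\varepsilon/2$ (respectively, for $f^{\Delta}(t)$ with $\varepsilon/(|\alpha|+1)$); the triangle inequality then gives
\[
|(f+g)^{\sigma}(t)-(f+g)(s)-(f^{\Delta}(t)+g^{\Delta}(t))(\sigma(t)-s)|\le\varepsilon|\sigma(t)-s|
\]
on $O$, and likewise for $\alpha f$.

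For item~3 the key algebraic identity is the three-term decomposition
\begin{equation*}
\begin{split}
&f^{\sigma}(t)g^{\sigma}(t)-f(s)g(s)-\bigl[f^{\Delta}(t)g(t)+f^{\sigma}(t)g^{\Delta}(t)\bigr](\sigma(t)-s)\\
&\quad=f^{\sigma}(t)\bigl[g^{\sigma}(t)-g(s)-g^{\Delta}(t)(\sigma(t)-s)\bigr]
+g(t)\bigl[f^{\sigma}(t)-f(s)-f^{\Delta}(t)(\sigma(t)-s)\bigr]\\
&\qquad+\bigl[f^{\sigma}(t)-f(s)\bigr]\bigl[g(s)-g(t)\bigr],
\end{split}
\end{equation*}
obtained by adding and subtracting $f^{\sigma}(t)g(s)$ and $f^{\sigma}(t)g(t)$. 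The first two brackets are $\le\varepsilon^{\ast}|\sigma(t)-s|$ near $t$ by the definitions of $g^{\Delta}(t)$ and $f^{\Delta}(t)$; in the last term the definition of $f^{\Delta}(t)$ gives $|f^{\sigma}(t)-f(s)|\le(|f^{\Delta}(t)|+\varepsilon^{\ast})|\sigma(t)-s|$, while continuity of $g$ at $t$ (item~1 of Theorem~\ref{differentiation}) gives $|g(s)-g(t)|\le\varepsilon^{\ast}$ on a neighbourhood, so this term is $\le(|f^{\Delta}(t)|+\varepsilon^{\ast})\varepsilon^{\ast}|\sigma(t)-s|$. Taking $\varepsilon^{\ast}$ small enough in terms of $\varepsilon$, $|f^{\sigma}(t)|$, $|g(t)|$ and $|f^{\Delta}(t)|$ yields the required bound $\varepsilon|\sigma(t)-s|$, and the mirror decomposition (swapping the roles of $f$ and $g$ and using the coefficients $f(t)$, $g^{\sigma}(t)$) gives the second form of the product rule. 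I expect this assembling of the three estimates — in particular checking that the cross term is genuinely $o(|\sigma(t)-s|)$ — to be the main technical point of the proof.

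A shorter, and perhaps more transparent, route for item~3 splits on the nature of $t$. If $t$ is right-scattered, then $f$, $g$, hence $fg$, are continuous at $t$, so item~2 of Theorem~\ref{differentiation} applies and $(fg)^{\Delta}(t)=(f^{\sigma}g^{\sigma}-fg)(t)/\mu(t)$, which one rearranges into either of the two claimed forms by adding and subtracting $f^{\sigma}(t)g(t)$ or $f(t)g^{\sigma}(t)$ in the numerator. If $t$ is right-dense, then $\sigma(t)=t$ and item~3 of Theorem~\ref{differentiation} reduces the statement to the classical product rule for limits, again using continuity of $g$; the two forms coincide at such points.

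For item~4 I would first establish the reciprocal rule $(1/g)^{\Delta}(t)=-g^{\Delta}(t)/(g(t)g^{\sigma}(t))$. Since $g(t)g^{\sigma}(t)\neq0$ forces $g(t)\neq0$ and $g$ is continuous at $t$, $g$ is bounded away from $0$ on some neighbourhood of $t$; the identity
\begin{equation*}
\begin{split}
&\frac{1}{g^{\sigma}(t)}-\frac{1}{g(s)}+\frac{g^{\Delta}(t)}{g(t)g^{\sigma}(t)}(\sigma(t)-s)\\
&\quad=\frac{-g(t)\bigl[g^{\sigma}(t)-g(s)-g^{\Delta}(t)(\sigma(t)-s)\bigr]+\bigl(g(s)-g(t)\bigr)g^{\Delta}(t)(\sigma(t)-s)}{g^{\sigma}(t)g(s)g(t)}
\end{split}
\end{equation*}
then shows, after the same estimates as in item~3 (numerator $O(\varepsilon^{\ast}|\sigma(t)-s|)$, denominator bounded below in modulus), that the left-hand side is $\le\varepsilon|\sigma(t)-s|$ near $t$. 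Finally, writing $f/g=f\cdot(1/g)$ and applying item~3 in the form $(fg)^{\Delta}=fg^{\Delta}+f^{\Delta}g^{\sigma}$,
\[
\left(\frac{f}{g}\right)^{\Delta}(t)=f(t)\cdot\frac{-g^{\Delta}(t)}{g(t)g^{\sigma}(t)}+f^{\Delta}(t)\cdot\frac{1}{g^{\sigma}(t)}=\frac{f^{\Delta}(t)g(t)-f(t)g^{\Delta}(t)}{g(t)g^{\sigma}(t)},
\]
as claimed.
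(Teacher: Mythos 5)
Your proof is correct: the three-term decomposition in item~3 and the numerator identity in item~4 both check out algebraically, the cross term is properly shown to be $o(|\sigma(t)-s|)$ via continuity of $g$ at $t$, and the reduction of the quotient rule to the reciprocal rule plus the product rule in the form $(fg)^{\Delta}=fg^{\Delta}+f^{\Delta}g^{\sigma}$ is sound. Note that the paper itself gives no proof of this statement --- it is quoted as Theorem~1.20 of \cite{BohnerDEOTS} among the preliminaries --- and your argument is essentially the standard one from that reference (which uses a slightly different four-term grouping for the product rule and likewise derives the quotient rule from the reciprocal rule), so there is nothing in the paper to contrast it with.
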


\begin{df}[Definition~1.71 of \cite{BohnerDEOTS}]
A function $F:\mathbb{T} \rightarrow \mathbb{R}$ is called
an antiderivative of $f:\mathbb{T} \rightarrow \mathbb{R}$ provided
$F^{\Delta}(t)=f(t)$ for all $t\in\mathbb{T}^{\kappa}$.
\end{df}

\begin{df}[\cite{BohnerDEOTS}]
A function $f:\mathbb{T} \rightarrow \mathbb{R}$ is called rd-continuous provided
it is continuous at right-dense points in $\mathbb{T}$ and its left-sided limits exists
(finite) at all left-dense points in $\mathbb{T}$.
\end{df}

The set of all rd-continuous functions $f:\mathbb{T} \rightarrow \mathbb{R}$
is denoted by $C_{rd} = C_{rd}(\mathbb{T}) = C_{rd}(\mathbb{T},\mathbb{R})$.
The set of functions $f:\mathbb{T} \rightarrow \mathbb{R}$ that are
$\Delta$-differentiable and whose derivative is rd-continuous is denoted by
$C^{1}_{rd}=C_{rd}^{1}(\mathbb{T})=C^{1}_{rd}(\mathbb{T},\mathbb{R})$.

\begin{tw}[Theorem~1.74 of \cite{BohnerDEOTS}]
Every rd-continuous function $f$ has an antiderivative $F$.
In particular, if $t_{0}\in\mathbb{T}$, then $F$ defined by
$$
F(t):=\int\limits_{t_{0}}^{t} f(\tau)\Delta \tau, \quad t\in\mathbb{T},
$$
is an antiderivative of $f$.
\end{tw}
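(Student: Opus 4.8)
The plan is to follow the classical route through \emph{regulated functions} and \emph{pre-antiderivatives}, upgrading to a genuine antiderivative at the very end using rd-continuity. First I would recall that a function $f$ is regulated if its right-sided limits exist finitely at every right-dense point and its left-sided limits exist finitely at every left-dense point, and observe that every rd-continuous $f$ is regulated: continuity at right-dense points supplies the right-sided limits there, and the left-sided limits at left-dense points are built into the definition of rd-continuity. So it suffices to produce, for an \emph{arbitrary} regulated $f$, a continuous $F$ that is $\Delta$-differentiable with $F^{\Delta}=f$ off a region $D$ (dense in $\mathbb{T}^{\kappa}$, with $\mathbb{T}^{\kappa}\setminus D$ countable and containing no right-scattered points), and then to show that rd-continuity forces differentiability on all of $\mathbb{T}^{\kappa}$.

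The technical heart is the existence of such a \emph{pre-antiderivative}. I would first establish a mean-value inequality on time scales: if $G$ and $H$ are continuous and $\Delta$-differentiable on a common region $D$ as above, with $|G^{\Delta}(t)|\le H^{\Delta}(t)$ for all $t\in D$, then $|G(s)-G(r)|\le H(s)-H(r)$ whenever $r\le s$; this is the one place where the induction principle on $\mathbb{T}$ is genuinely needed. Next, on any compact interval of $\mathbb{T}$ a regulated function is the uniform limit of piecewise-constant ``staircase'' functions, and each staircase trivially admits a pre-antiderivative (piecewise affine on the right-dense pieces, a telescoping partial sum across the scattered pieces, these fitting together by item~2 of Theorem~\ref{differentiation}). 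Applying the mean-value inequality to differences of these pre-antiderivatives converts uniform convergence of the staircases into uniform convergence of their pre-antiderivatives, and the limit $F$ is then checked, point by point using items~2 and~3 of Theorem~\ref{differentiation}, to be a pre-antiderivative of $f$.

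It remains to fill in the countably many exceptional points, each of which is necessarily right-dense. Fix such a $t_{\ast}\in\mathbb{T}^{\kappa}\setminus D$. By rd-continuity, given $\varepsilon>0$ there is a (closed) neighborhood $O$ of $t_{\ast}$ in $\mathbb{T}$ on which $|f(\tau)-f(t_{\ast})|\le\varepsilon$; applying the mean-value inequality on $O$ to $G(t)=F(t)-f(t_{\ast})t$ and $H(t)=\varepsilon t$, and taking $r=t_{\ast}$ (legitimate since $t_{\ast}\in\mathbb{T}$), gives $|F(s)-F(t_{\ast})-f(t_{\ast})(s-t_{\ast})|\le\varepsilon|s-t_{\ast}|$ for all $s\in O$. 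Because $t_{\ast}$ is right-dense we have $\sigma(t_{\ast})=t_{\ast}$, hence $F^{\sigma}(t_{\ast})=F(t_{\ast})$, and this last estimate is exactly the defining inequality for $F^{\Delta}(t_{\ast})=f(t_{\ast})$ (here item~4 of Theorem~\ref{differentiation} is what makes the estimate collapse to the derivative condition). Thus $F^{\Delta}=f$ on all of $\mathbb{T}^{\kappa}$. For the ``in particular'' part, two antiderivatives differ by a constant (apply the mean-value inequality with $H\equiv 0$ to their difference), so the Cauchy delta-integral $\int_{t_{0}}^{t}f(\tau)\Delta\tau$, by definition $F(t)-F(t_{0})$ for any antiderivative $F$, is well defined and, as a function of $t$, is again an antiderivative of $f$. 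The main obstacle is clearly the construction of the pre-antiderivative — equivalently, the mean-value inequality proved by induction on the time scale — while the passage to rd-continuous $f$ and the identification of $\int_{t_{0}}^{t}f$ are routine.
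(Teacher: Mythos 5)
The paper states this result without proof, quoting it verbatim as Theorem~1.74 of \cite{BohnerDEOTS}; your argument is precisely the standard proof from that source --- rd-continuity implies $f$ is regulated, regulated functions admit pre-antiderivatives, and continuity of $f$ at the (necessarily right-dense) exceptional points upgrades the pre-antiderivative to a genuine antiderivative via the $\varepsilon$-estimate you derive from the mean-value inequality --- and it is correct. You in fact supply more than the cited reference does, since Bohner and Peterson leave the existence of pre-antiderivatives to Hilger's work, whereas you sketch its proof via the mean-value inequality and uniform approximation by staircase functions.
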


\begin{df}
Let $\mathbb{T}$ be a time scale and $a,b\in\mathbb{T}$.
If $f:\mathbb{T}^{\kappa} \rightarrow \mathbb{R}$ is a rd-continuous
function and $F:\mathbb{T} \rightarrow \mathbb{R}$
is an antiderivative of $f$, then the $\Delta$-integral is defined by
$$
\int\limits_{a}^{b} f(t)\Delta t := F(b)-F(a).
$$
\end{df}

\begin{ex}
\label{int hZ}
Let $a,b\in\mathbb{T}$ and $f:\mathbb{T} \rightarrow \mathbb{R}$ be rd-continuous.
If $\mathbb{T}=\mathbb{R}$, then
\begin{equation*}
\int\limits_{a}^{b}f(t)\Delta t=\int\limits_{a}^{b}f(t)dt,
\end{equation*}
where the integral on the right side is the usual Riemann integral.
If $\mathbb{T}=h\mathbb{Z}$, $h>0$, then
\begin{equation*}
\int\limits_{a}^{b}f(t)\Delta t
=
\begin{cases}
\sum\limits_{k=\frac{a}{h}}^{\frac{b}{h}-1}f(kh)h, & \hbox{ if } a<b, \\
0, & \hbox{ if } a=b,\\
-\sum\limits_{k=\frac{b}{h}}^{\frac{a}{h}-1}f(kh)h, & \hbox{ if } a>b.
\end{cases}
\end{equation*}
\end{ex}

\begin{tw}[Theorem~1.75 of \cite{BohnerDEOTS}]
\label{eqDelta1}
If $f\in C_{rd}$ and $t\in \mathbb{T}^{\kappa}$, then
\begin{equation*}
\int\limits_{t}^{\sigma(t)}f(\tau)\Delta \tau=\mu(t)f(t).
\end{equation*}
\end{tw}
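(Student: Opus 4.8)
The plan is to reduce everything to the defining property of the $\Delta$-integral together with the elementary relation between $f^\sigma$ and $f^\Delta$. Since $f\in C_{rd}$, the existence theorem (Theorem~1.74 of \cite{BohnerDEOTS}, quoted above) guarantees that $f$ has an antiderivative $F:\mathbb{T}\to\mathbb{R}$, that is, a function with $F^{\Delta}(\tau)=f(\tau)$ for all $\tau\in\mathbb{T}^{\kappa}$. By the definition of the $\Delta$-integral,
\[
\int\limits_{t}^{\sigma(t)} f(\tau)\Delta\tau = F(\sigma(t)) - F(t) = F^{\sigma}(t) - F(t),
\]
so the whole claim reduces to showing that $F^{\sigma}(t)-F(t)=\mu(t)f(t)$.

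Next I would split into the two possible types of a point $t\in\mathbb{T}^{\kappa}$. If $t$ is right-dense, then $\sigma(t)=t$ and $\mu(t)=0$, whence $F^{\sigma}(t)-F(t)=F(t)-F(t)=0=\mu(t)f(t)$ and the identity holds trivially. If $t$ is right-scattered, then $\mu(t)>0$; since $F$ is $\Delta$-differentiable at $t$, item~4 of Theorem~\ref{differentiation} gives $F^{\sigma}(t)=F(t)+\mu(t)F^{\Delta}(t)$, hence $F^{\sigma}(t)-F(t)=\mu(t)F^{\Delta}(t)=\mu(t)f(t)$. (Equivalently, one may invoke item~2 of Theorem~\ref{differentiation} directly, which at a right-scattered point expresses $F^{\Delta}(t)$ as $\bigl(F^{\sigma}(t)-F(t)\bigr)/\mu(t)$.)

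Combining the two cases yields $\int_{t}^{\sigma(t)} f(\tau)\Delta\tau = \mu(t)f(t)$ in all cases, as required. I do not anticipate any genuine obstacle here; the only points needing care are to treat the right-dense case separately, since there the quotient $\bigl(F^{\sigma}(t)-F(t)\bigr)/\mu(t)$ is undefined, and to keep in mind that the hypothesis $f\in C_{rd}$ is exactly what makes the left-hand side meaningful, via the existence of the antiderivative $F$.
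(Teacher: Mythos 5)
Your proof is correct. The paper itself gives no proof of this statement --- it is quoted directly as Theorem~1.75 of \cite{BohnerDEOTS} --- so the comparison is with the standard argument, which is exactly the one you give: take an antiderivative $F$ of $f$ (guaranteed by $f\in C_{rd}$ and Theorem~1.74), write the integral as $F(\sigma(t))-F(t)$ by definition, and conclude via $F^{\sigma}(t)=F(t)+\mu(t)F^{\Delta}(t)$. The only inessential detour is your case split into right-dense and right-scattered points: item~4 of Theorem~\ref{differentiation} already holds uniformly at every point of differentiability, with no division by $\mu(t)$ involved, so the two cases need not be treated separately. Your parenthetical alternative (invoking item~2 directly) is the variant that genuinely requires the case distinction, since the difference quotient there is undefined when $\mu(t)=0$.
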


\begin{tw}[Theorem ~1.77 of \cite{BohnerDEOTS}]
\label{intpropdelta}
If $a,b\in\mathbb{T}$, $a\leqslant c \leqslant b$,
$\alpha\in\mathbb{R}$, and $f,g \in C_{rd}(\mathbb{T}, \mathbb{R})$, then:
\begin{enumerate}
\item
$\int\limits_{a}^{b}(f(t)+g(t))\Delta t
=\int\limits_{a}^{b} f(t)\Delta t+\int\limits_{a}^{b}g(t)\Delta t$,
\item
$\int\limits_{a}^{b}\alpha f(t)\Delta t=\alpha \int\limits_{a}^{b} f(t)\Delta t$,
\item
$\int\limits_{a}^{b}f(t)\Delta t
=-\int\limits_{b}^{a}f(t)\Delta t$,
\item
$\int\limits_{a}^{b} f(t)\Delta t
=\int\limits_{a}^{c} f(t)\Delta t
+\int\limits_{c}^{b} f(t)\Delta t$,
\item
$\int\limits_{a}^{a} f(t)\Delta t=0$,
\item
$\int\limits_{a}^{b}f(t)g^{\Delta}(t)\Delta t
=\left.f(t)g(t)\right|^{t=b}_{t=a}
-\int\limits_{a}^{b} f^{\Delta}(t)g^\sigma(t)\Delta t$,
\item
$\int\limits_{a}^{b} f^\sigma(t) g^{\Delta}(t)\Delta t
=\left.f(t)g(t)\right|^{t=b}_{t=a}
-\int\limits_{a}^{b}f^{\Delta}(t)g(t)\Delta t$,
\item
if $f(t)\geqslant0$ for all $a\leqslant t < b$,
then $\int\limits_{a}^{b}f(t)\Delta t \geqslant 0$.
\end{enumerate}
\end{tw}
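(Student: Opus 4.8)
The plan is to obtain all eight items from two ingredients already on the table: the \emph{definition} of the $\Delta$-integral as a difference of values of an antiderivative, $\int_a^b h(t)\,\Delta t = H(b)-H(a)$ whenever $H^\Delta=h$ (legitimate because every rd-continuous function has an antiderivative, stated above), together with linearity and the product rule for the $\Delta$-derivative (Theorem~\ref{tw:differpropdelta}). For items~1--5, fix antiderivatives $F$ of $f$ and $G$ of $g$. By Theorem~\ref{tw:differpropdelta}, items~1 and~2, the function $F+G$ is an antiderivative of $f+g$ and $\alpha F$ of $\alpha f$; substituting these into the definition of the integral gives items~1 and~2, while items~3,~4,~5 are pure telescoping: $\int_b^a f=F(a)-F(b)=-\int_a^b f$, $\int_a^c f+\int_c^b f=(F(c)-F(a))+(F(b)-F(c))=\int_a^b f$, and $\int_a^a f=F(a)-F(a)=0$. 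Nothing delicate here.

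Items~6 and~7 come straight from the product rule, Theorem~\ref{tw:differpropdelta}, item~3. From $(fg)^\Delta=fg^\Delta+f^\Delta g^\sigma$ rearrange to $fg^\Delta=(fg)^\Delta-f^\Delta g^\sigma$ and integrate from $a$ to $b$, using items~1--2 just proved and the fact that $fg$ is an antiderivative of $(fg)^\Delta$; this yields $\int_a^b f(t)g^\Delta(t)\,\Delta t=\left.f(t)g(t)\right|_{t=a}^{t=b}-\int_a^b f^\Delta(t)g^\sigma(t)\,\Delta t$, which is item~6. Item~7 is identical after starting instead from the other form $(fg)^\Delta=f^\Delta g+f^\sigma g^\Delta$. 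One should only note in passing that the differentiability tacitly required of $g$ (resp.\ $f$), together with rd-continuity, makes $fg$, $f^\Delta g^\sigma$ and $f^\sigma g^\Delta$ rd-continuous (if $h$ is rd-continuous, so is $h^\sigma$), so every integral written down is defined.

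The only real work is item~8, and this is the step I expect to be the main obstacle. Put $F(t):=\int_a^t f(\tau)\,\Delta\tau$; then $F^\Delta(t)=f(t)\ge 0$ for $t\in[a,b)_{\mathbb{T}}$ and $F$ is continuous (Theorem~\ref{differentiation}, item~1), so the assertion is equivalent to $F(b)\ge F(a)$. I would prove the general monotonicity fact that a continuous function with nonnegative $\Delta$-derivative on $[a,b)_{\mathbb{T}}$ is nondecreasing on $[a,b]_{\mathbb{T}}$. Fix $\varepsilon>0$, set $G(t)=F(t)+\varepsilon t$ so that $G^\Delta=F^\Delta+\varepsilon>0$, and let $c$ be the supremum of the set $\{t\in[a,b]_{\mathbb{T}}:\ G(\tau)\ge G(a)\text{ for all }\tau\in[a,t]_{\mathbb{T}}\}$. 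Closedness of $\mathbb{T}$ and continuity of $G$ give $c\in[a,b]_{\mathbb{T}}$ with $G\ge G(a)$ throughout $[a,c]_{\mathbb{T}}$. If $c<b$, then $G^\Delta(c)>0$, and a case split---using the difference-quotient formula of Theorem~\ref{differentiation}, item~2, when $c$ is right-scattered, and the $\varepsilon$-$\delta$ definition of $G^\Delta(c)$ when $c$ is right-dense---produces a point $s\in(c,b]_{\mathbb{T}}$ with $G\ge G(a)$ on all of $[a,s]_{\mathbb{T}}$, contradicting the choice of $c$. Hence $c=b$, so $G(b)\ge G(a)$, i.e.\ $F(b)-F(a)\ge-\varepsilon(b-a)$; letting $\varepsilon\to0^{+}$ gives $F(b)\ge F(a)$, and item~8 follows since $\int_a^b f=F(b)-F(a)$. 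The delicate points I would watch are that the supremum $c$ actually lands in $\mathbb{T}$ and inherits the defining property (a short argument from closedness and continuity of $G$), and that the right-scattered/right-dense dichotomy together with the construction of $s$ is exhaustive and never overshoots $b$---which is exactly where the definitions of $\sigma$, right-density and $\mathbb{T}^{\kappa}$ do their work.
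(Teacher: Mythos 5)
The paper does not prove this statement: Theorem~\ref{intpropdelta} is quoted in the preliminaries as Theorem~1.77 of \cite{BohnerDEOTS}, with the proof deferred entirely to that reference, so there is no argument in the paper to compare yours against. Judged on its own, your proof is correct and is essentially the standard textbook one. Items~1--5 do follow at once from the definition of the $\Delta$-integral via antiderivatives plus linearity of the $\Delta$-derivative, and items~6--7 from the two forms of the product rule in Theorem~\ref{tw:differpropdelta}; you are right to flag that these two items tacitly require $f$ and $g$ to be $\Delta$-differentiable (i.e.\ $C^1_{rd}$ rather than merely $C_{rd}$, a looseness in the statement as quoted). For item~8, your $\varepsilon$-perturbation $G=F+\varepsilon t$ followed by the supremum argument with the right-scattered/right-dense dichotomy is a correct hand-rolled instance of the induction principle on time scales, which is the tool Bohner and Peterson themselves use for such monotonicity claims. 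The delicate points you identify are the right ones and all check out: $c=\sup S$ lies in $\mathbb{T}$ by closedness and satisfies $G\ge G(a)$ on $[a,c]_{\mathbb{T}}$ by continuity of $G$ (which holds since $G$ is $\Delta$-differentiable, hence continuous, on $[a,b)_{\mathbb{T}}$, continuity at a left-scattered $b$ being automatic); and $\sigma(c)\le b$ whenever $c<b$ because $b\in\mathbb{T}$ and $b>c$. No gaps.
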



\subsection{Delta dynamic equations}
\label{equations}

We now recall the definition and main properties of the delta exponential function.
The general solution to a linear and homogenous second-order delta differential
equation with constant coefficients is given.

\begin{df}[Definition~2.25 of \cite{BohnerDEOTS}]
We say that a function $p:\mathbb{T} \rightarrow \mathbb{R}$ is regressive if
$$
1+\mu(t)p(t) \neq 0
$$
for all $t\in\mathbb{T}^{\kappa}$.
The set of all regressive and rd-continuous functions
$f:\mathbb{T} \rightarrow \mathbb{R}$ is denoted by
$\mathcal{R}=\mathcal{R}(\mathbb{T})=\mathcal{R}(\mathbb{T},\mathbb{R})$.
\end{df}

\begin{df}[Definition~2.30 of \cite{BohnerDEOTS}]
If $p\in\mathcal{R}$, then we define the exponential function by
\begin{equation*}
e_{p}(t,s):= exp\left(\int\limits_{s}^{t}\xi_{\mu(\tau)}(p(\tau))\Delta\tau\right),
\quad s,t\in\mathbb{T},
\end{equation*}
where $\xi_{\mu}$ is the cylinder transformation (see \cite[Definition~2.21]{BohnerDEOTS}).
\end{df}

\begin{ex}
\label{ex:16}
Let $\mathbb{T}$ be a time scale, $t_0 \in \mathbb{T}$,
and $\alpha\in\mathcal{R}(\mathbb{T},\mathbb{R})$.
If $\mathbb{T}=\mathbb{R}$, then
$e_{\alpha}(t,t_{0})=e^{\alpha(t-t_{0})}$ for all $t \in\mathbb{T}$.
If $\mathbb{T}=h\mathbb{Z}$, $h>0$, and
$\alpha\in\mathbb{C}\backslash\left\{-\frac{1}{h}\right\}$ is a constant, then
\begin{equation}
\label{exp:in:hZ}
e_{\alpha}(t,t_{0})=\left(1+\alpha h\right)^{\frac{t-t_{0}}{h}}
\hbox{  for all  } t \in \mathbb{T}.
\end{equation}
\end{ex}

\begin{tw}[Theorem~2.36 of \cite{BohnerDEOTS}]
\label{properties_exp_delta}
Let $p,q\in\mathcal{R}$ and $\ominus p(t):=\frac{-p(t)}{1+\mu(t)p(t)}$.
The following holds:
\begin{enumerate}
\item
$e_{0}(t,s)\equiv 1 \hbox{ and } e_{p}(t,t)\equiv 1$;
\item
$e_{p}(\sigma(t),s)=(1+\mu(t)p(t))e_{p}(t,s)$;
\item
$\frac{1}{e_{p}(t,s)}=e_{\ominus p}(t,s)$;
\item
$e_{p}(t,s)=\frac{1}{e_{p}(s,t)}=e_{\ominus p}(s,t)$;
\item
$\left(\frac{1}{e_{p}(t,s)}\right)^{\Delta}
=\frac{-p(t)}{e_{p}^{\sigma}(t,s)}$.
\end{enumerate}
\end{tw}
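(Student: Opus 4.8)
The plan is to reduce every one of the five claims to the defining formula $e_p(t,s)=\exp\bigl(\int_s^t \xi_{\mu(\tau)}(p(\tau))\,\Delta\tau\bigr)$ together with two elementary facts about the cylinder transformation $\xi_h$: first, $\xi_h(0)=0$ for every $h\geq 0$; second, the identity $\mu(t)\,\xi_{\mu(t)}(z)=\operatorname{Log}(1+\mu(t)z)$, which holds for $\mu(t)>0$ directly from the definition of $\xi_h$ and trivially ($0=0$) when $\mu(t)=0$. Beyond these, the only machinery used is additivity and the sign rule for the $\Delta$-integral (parts~1, 3, 4 of Theorem~\ref{intpropdelta}), the single-step formula $\int_t^{\sigma(t)} f(\tau)\,\Delta\tau=\mu(t)f(t)$ of Theorem~\ref{eqDelta1}, and, for item~5, part~2 of Theorem~\ref{differentiation}.

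For item~1, $e_0(t,s)=\exp\bigl(\int_s^t \xi_{\mu(\tau)}(0)\,\Delta\tau\bigr)=\exp 0=1$, and $e_p(t,t)=\exp\bigl(\int_t^t\cdots\bigr)=\exp 0=1$. For item~2 I would split $\int_s^{\sigma(t)}=\int_s^t+\int_t^{\sigma(t)}$ and evaluate the second integral by Theorem~\ref{eqDelta1} as $\mu(t)\,\xi_{\mu(t)}(p(t))$; exponentiating and invoking the cylinder identity gives $e_p(\sigma(t),s)=e_p(t,s)\exp\bigl(\operatorname{Log}(1+\mu(t)p(t))\bigr)=(1+\mu(t)p(t))\,e_p(t,s)$.

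For items~3 and~4 I would first record that $\ominus p$ is admissible, since $1+\mu(t)\,(\ominus p)(t)=1/(1+\mu(t)p(t))\neq 0$ (so $\ominus p\in\mathcal R$) and rd-continuity is preserved under the algebraic operations involved. A one-line computation then shows $\xi_{\mu}(\ominus p)=-\xi_{\mu}(p)$: for $\mu>0$ because $\operatorname{Log}\frac{1}{1+\mu p}=-\operatorname{Log}(1+\mu p)$, and trivially for $\mu=0$ since $\xi_0(z)=z$ and $\ominus p=-p$ there. Hence $e_p(t,s)\,e_{\ominus p}(t,s)=\exp\bigl(\int_s^t[\xi_\mu(p)+\xi_\mu(\ominus p)]\,\Delta\tau\bigr)=\exp 0=1$, which is item~3; and the sign rule $\int_t^s=-\int_s^t$ gives $e_p(s,t)=1/e_p(t,s)=e_{\ominus p}(t,s)$, which rearranges to the chain of equalities in item~4.

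The substantive step is item~5, and the obstacle is that one must know that $t\mapsto e_q(t,s)$ genuinely satisfies $e_q^{\Delta}=q\,e_q$. At right-scattered $t$ this is immediate from item~2 and part~2 of Theorem~\ref{differentiation}, since $e_q^{\Delta}(t)=\frac{e_q(\sigma(t),s)-e_q(t,s)}{\mu(t)}=q(t)\,e_q(t,s)$; at right-dense $t$ one differentiates $\exp$ of a $\Delta$-antiderivative and uses $\xi_0(z)=z$ to obtain the same identity — this is the only place a true limit argument is required, and it is exactly where the statement is more than formal manipulation. Granting it, apply it with $q=\ominus p$, using item~4 in the form $e_{\ominus p}=1/e_p$ and item~2 applied to $e_{\ominus p}$ to write $e_{\ominus p}(t,s)=(1+\mu(t)p(t))\,e_{\ominus p}^{\sigma}(t,s)$; then
\[
\left(\frac{1}{e_p(t,s)}\right)^{\!\Delta}=(\ominus p)(t)\,e_{\ominus p}(t,s)=\frac{-p(t)}{1+\mu(t)p(t)}\,(1+\mu(t)p(t))\,e_{\ominus p}^{\sigma}(t,s)=\frac{-p(t)}{e_p^{\sigma}(t,s)},
\]
where the last equality is item~3 evaluated at $\sigma(t)$. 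This establishes item~5 and completes the proof.
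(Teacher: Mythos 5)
The paper does not prove this statement: it is quoted verbatim as Theorem~2.36 of the Bohner--Peterson monograph and used as a black box, so there is no in-paper proof to compare against. Judged on its own, your argument is correct and is essentially the standard derivation from the cylinder-transformation definition $e_p(t,s)=\exp\bigl(\int_s^t\xi_{\mu(\tau)}(p(\tau))\,\Delta\tau\bigr)$: items~1--4 reduce cleanly to $\xi_h(0)=0$, the identity $\mu\,\xi_{\mu}(z)=\operatorname{Log}(1+\mu z)$, the additivity and sign rules for the $\Delta$-integral, and the single-step formula $\int_t^{\sigma(t)}f\,\Delta\tau=\mu(t)f(t)$; and your reduction of item~5 to the dynamic equation $e_q^{\Delta}=q\,e_q$ with $q=\ominus p$, combined with items~2--4, is exactly how the reference handles it. You correctly identify the one point that is more than formal manipulation, namely that $e_q^{\Delta}=q\,e_q$ at right-dense points requires a genuine limit argument; to make that step airtight you should note that the general chain rule fails on time scales but is recovered here because at a right-dense $t$ the $\Delta$-derivative is an ordinary two-sided limit along $\mathbb{T}$, so writing the difference quotient of $\exp\circ F$ as $\frac{\exp(F(t))-\exp(F(s))}{F(t)-F(s)}\cdot\frac{F(t)-F(s)}{t-s}$ (with the degenerate case $F(s)=F(t)$ handled separately) and using continuity of the antiderivative $F$ gives $\exp(F(t))\,F^{\Delta}(t)=q(t)\,e_q(t,s)$. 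With that sentence added, the proof is complete.
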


\begin{tw}[Theorem~2.62 of \cite{BohnerDEOTS}]
Suppose $y^{\Delta}=p(t)y$ is regressive, that is, $p\in\mathcal{R}$.
Let $t_{0}\in\mathbb{T}$ and $y_{0}\in\mathbb{R}$.
The unique solution to the initial value problem
\begin{equation*}
y^{\Delta}(t) = p(t) y(t), \quad y(t_{0})=y_{0},
\end{equation*}
is given by $y(t)=e_{p}(t,t_{0})y_{0}$.
\end{tw}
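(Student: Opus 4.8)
The plan is to establish the two parts of the claim separately: first that the function $y(t):=e_p(t,t_0)\,y_0$ is a solution of the initial value problem (existence), and then that it is the only one (uniqueness).

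\emph{Existence.} The initial condition holds at once, since $y(t_0)=e_p(t_0,t_0)\,y_0=y_0$ by item~1 of Theorem~\ref{properties_exp_delta}. By item~2 of Theorem~\ref{tw:differpropdelta}, to verify the dynamic equation it suffices to show $e_p^\Delta(t,t_0)=p(t)\,e_p(t,t_0)$. I would deduce this from Theorem~\ref{properties_exp_delta} alone. Observe first that $\ominus p$ is rd-continuous and regressive, because $1+\mu(t)(\ominus p)(t)=\bigl(1+\mu(t)p(t)\bigr)^{-1}\neq 0$; hence $e_{\ominus p}$ is well defined and, by item~3, $e_p(t,t_0)=1/e_{\ominus p}(t,t_0)$. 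Applying item~5 to $\ominus p$, then item~2 to expand $e_{\ominus p}^\sigma$ together with item~3 once more, a short computation gives
\[
e_p^\Delta(t,t_0)
=\frac{-(\ominus p)(t)}{e_{\ominus p}^\sigma(t,t_0)}
=-(\ominus p)(t)\,\bigl(1+\mu(t)p(t)\bigr)\,e_p(t,t_0)
=p(t)\,e_p(t,t_0),
\]
where the last equality uses $(\ominus p)(t)=-p(t)/\bigl(1+\mu(t)p(t)\bigr)$. Thus $y^\Delta(t)=y_0\,e_p^\Delta(t,t_0)=p(t)\,y(t)$. (Alternatively, $e_p^\Delta(\cdot,t_0)=p\,e_p(\cdot,t_0)$ can be obtained straight from the cylinder-transformation definition of $e_p$: at a right-dense $t$ one has $\mu(t)=0$ and $\xi_0=\mathrm{id}$, so the claim reduces to the ordinary chain rule via item~3 of Theorem~\ref{differentiation}; at a right-scattered $t$ it follows from item~2 of Theorem~\ref{properties_exp_delta} combined with item~2 of Theorem~\ref{differentiation}.)

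\emph{Uniqueness.} Let $\tilde y$ be any solution of the problem. Since $p$ is regressive, $e_p(\cdot,t_0)$ never vanishes: $e_p(t_0,t_0)=1$, and $e_p(\sigma(t),t_0)=\bigl(1+\mu(t)p(t)\bigr)e_p(t,t_0)$ by item~2 of Theorem~\ref{properties_exp_delta}, so the value cannot become $0$ at right-scattered points, while near dense points $e_p$ is an ordinary exponential. Therefore $z(t):=\tilde y(t)/e_p(t,t_0)$ is well defined, and since $e_p(t,t_0)\,e_p^\sigma(t,t_0)\neq 0$ it is $\Delta$-differentiable by the quotient rule, item~4 of Theorem~\ref{tw:differpropdelta}. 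Writing $z=\tilde y\cdot\bigl(1/e_p(\cdot,t_0)\bigr)$ and applying the product rule (item~3 of Theorem~\ref{tw:differpropdelta}) together with item~5 of Theorem~\ref{properties_exp_delta}, then substituting $\tilde y^\sigma(t)=\bigl(1+\mu(t)p(t)\bigr)\tilde y(t)$ — which follows from $\tilde y^\Delta=p\,\tilde y$ and item~4 of Theorem~\ref{differentiation} — and $e_p^\sigma(t,t_0)=\bigl(1+\mu(t)p(t)\bigr)e_p(t,t_0)$, the two resulting terms cancel and $z^\Delta\equiv 0$ on $\mathbb{T}^\kappa$. As $z$ is then an antiderivative of the zero function, $z(b)-z(a)=\int_a^b z^\Delta(t)\,\Delta t=0$ for all $a,b\in\mathbb{T}$, so $z$ is constant and $z(t)\equiv z(t_0)=\tilde y(t_0)/e_p(t_0,t_0)=y_0$. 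Hence $\tilde y(t)=e_p(t,t_0)\,y_0=y(t)$.

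The cancellation in the uniqueness step and the check that $\ominus p\in\mathcal{R}$ are routine. The only genuinely delicate point is the identity $e_p^\Delta(\cdot,t_0)=p\,e_p(\cdot,t_0)$: carried out directly from the cylinder-transformation definition it forces one to treat right-dense and right-scattered points separately and to invoke a chain-rule argument for the classical exponential, whereas the algebraic route above sidesteps this at the cost of relying on items~2, 3 and~5 of Theorem~\ref{properties_exp_delta} being already in hand. I would also be careful to justify that $e_p(\cdot,t_0)$ is nowhere zero before invoking the quotient rule.
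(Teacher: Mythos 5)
The paper does not prove this statement: it is imported verbatim as Theorem~2.62 of the cited reference \cite{BohnerDEOTS}, so there is no in-paper proof to compare against. Your argument is correct and is essentially the standard one from that reference (existence via $e_p^{\Delta}(\cdot,t_0)=p\,e_p(\cdot,t_0)$, uniqueness by showing $\tilde y/e_p(\cdot,t_0)$ has vanishing $\Delta$-derivative and is therefore constant); the algebra in both the existence and the cancellation steps checks out. Two minor points are worth tightening. First, the nonvanishing of $e_p(\cdot,t_0)$ follows immediately from item~3 of Theorem~\ref{properties_exp_delta}, since $e_p(t,t_0)\,e_{\ominus p}(t,t_0)=1$ forces $e_p(t,t_0)\neq 0$ for every $t$; your argument that propagates nonvanishing through right-scattered points and appeals to ``an ordinary exponential near dense points'' is not a rigorous induction on a general time scale (one cannot step through a dense portion of $\mathbb{T}$ that way), so replace it with the one-line identity. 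Second, be aware that deriving $e_p^{\Delta}=p\,e_p$ from item~5 of Theorem~\ref{properties_exp_delta} is logically sound within this paper, which states that theorem as a black box, but in the source text item~5 is itself proved from $e_p^{\Delta}=p\,e_p$, so your ``algebraic route'' would be circular if one unwound the references; your parenthetical alternative via the cylinder-transformation definition is the non-circular derivation.
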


Let us consider the following linear second-order dynamic
homogeneous equation with constant coefficients:
\begin{equation}
\label{eq1}
y^{\Delta\Delta}+\alpha y^{\Delta}+\beta y=0,
\quad \alpha, \beta \in \mathbb{R}.
\end{equation}
We say that the dynamic equation \eqref{eq1} is regressive if
$1-\alpha\mu(t)+\beta\mu^{2}(t)\neq 0$ for
$t\in\mathbb{T}^{\kappa}$, i.e., $\beta\mu-\alpha\in\mathcal{R}$.

\begin{df}[Definition~3.5 of \cite{BohnerDEOTS}]
Given two delta differentiable functions $y_{1}$ and $y_{2}$,
we define the Wronskian $W(y_{1},y_{2})(t)$ by
$$
W(y_{1},y_{2})(t) := \det \left[
\begin{array}{cc}
y_{1}(t)&y_{2}(t)\\
y_{1}^{\Delta}(t)&y_{2}^{\Delta}(t)
\end{array}\right].
$$
We say that two solutions $y_{1}$ and $y_{2}$
of \eqref{eq1} form a fundamental set of solutions
(or a fundamental system) for \eqref{eq1},
provided $W(y_{1},y_{2})(t)\neq 0$
for all $t\in\mathbb{T}^{\kappa}$.
\end{df}

\begin{tw}[Theorem~3.16 of \cite{BohnerDEOTS}]
\label{fund sys}
If \eqref{eq1} is regressive and $\alpha^{2}-4\beta\neq 0$,
then a fundamental system for \eqref{eq1} is given by
$e_{\lambda_{1}}(\cdot,t_{0})$ and $e_{\lambda_{2}}(\cdot,t_{0})$,
where $t_{0}\in\mathbb{T}^{\kappa}$ and $\lambda_{1}$ and $\lambda_{2}$ are given by
\begin{equation*}
\lambda_{1} := \frac{-\alpha-\sqrt{\alpha^{2}-4\beta}}{2},
\quad \lambda_{2} := \frac{-\alpha+\sqrt{\alpha^{2}-4\beta}}{2}.
\end{equation*}
\end{tw}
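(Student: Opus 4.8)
The plan is to check three things in turn: that the delta exponentials $e_{\lambda_1}(\cdot,t_0)$ and $e_{\lambda_2}(\cdot,t_0)$ are well defined (i.e.\ that the constants $\lambda_1,\lambda_2$ are regressive), that each of them solves \eqref{eq1}, and that their Wronskian is nowhere zero on $\mathbb{T}^\kappa$; the last two facts together are exactly what ``fundamental system'' means. For the first point, observe that $\lambda_1$ and $\lambda_2$ are, by the quadratic formula, precisely the two roots of the characteristic polynomial $\lambda^2+\alpha\lambda+\beta=0$, so $\lambda_1+\lambda_2=-\alpha$ and $\lambda_1\lambda_2=\beta$. Consequently, for every $t\in\mathbb{T}^\kappa$,
\[
(1+\mu(t)\lambda_1)\,(1+\mu(t)\lambda_2)
=1+\mu(t)(\lambda_1+\lambda_2)+\mu^2(t)\lambda_1\lambda_2
=1-\alpha\mu(t)+\beta\mu^2(t)\neq 0,
\]
where the last inequality is the assumed regressivity of \eqref{eq1}. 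Hence $1+\mu(t)\lambda_i\neq 0$ for $i=1,2$ (over $\mathbb{C}$ when $\alpha^2-4\beta<0$), so $\lambda_i\in\mathcal R$ and $e_{\lambda_i}(\cdot,t_0)$ is defined.

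Next I would substitute $y=e_{\lambda_i}(\cdot,t_0)$ into \eqref{eq1}. Since $e_{\lambda_i}(\cdot,t_0)$ is the solution of the first-order dynamic equation $y^\Delta=\lambda_i y$, differentiating once more gives $y^{\Delta\Delta}=\lambda_i y^\Delta=\lambda_i^2 y$, whence
\[
y^{\Delta\Delta}+\alpha y^\Delta+\beta y=\bigl(\lambda_i^2+\alpha\lambda_i+\beta\bigr)\,e_{\lambda_i}(t,t_0)=0
\]
because $\lambda_i$ is a root of the characteristic polynomial. Thus $e_{\lambda_1}(\cdot,t_0)$ and $e_{\lambda_2}(\cdot,t_0)$ are both solutions of \eqref{eq1}.

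Finally, using $e_{\lambda_i}^\Delta(\cdot,t_0)=\lambda_i e_{\lambda_i}(\cdot,t_0)$ in the definition of the Wronskian,
\[
W\bigl(e_{\lambda_1}(\cdot,t_0),e_{\lambda_2}(\cdot,t_0)\bigr)(t)
=\lambda_2\,e_{\lambda_1}(t,t_0)\,e_{\lambda_2}(t,t_0)-\lambda_1\,e_{\lambda_1}(t,t_0)\,e_{\lambda_2}(t,t_0)
=(\lambda_2-\lambda_1)\,e_{\lambda_1}(t,t_0)\,e_{\lambda_2}(t,t_0).
\]
Here $\lambda_2-\lambda_1=\sqrt{\alpha^2-4\beta}\neq 0$ by hypothesis, and neither exponential vanishes, since by item~3 of Theorem~\ref{properties_exp_delta} the function $e_{\lambda_i}(t,t_0)$ has reciprocal $e_{\ominus\lambda_i}(t,t_0)$. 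Therefore $W\neq 0$ on all of $\mathbb{T}^\kappa$, so $\{e_{\lambda_1}(\cdot,t_0),e_{\lambda_2}(\cdot,t_0)\}$ is a fundamental system for \eqref{eq1}. The argument is essentially routine substitution; the only step requiring a small idea is the regressivity check, and it is dispatched by the factorization $(1+\mu\lambda_1)(1+\mu\lambda_2)=1-\alpha\mu+\beta\mu^2$ together with Vieta's relations, so I would expect no genuine obstacle.
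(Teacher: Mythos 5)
Your proof is correct; note that the paper itself gives no proof of this statement, importing it verbatim as Theorem~3.16 of \cite{BohnerDEOTS}, and your argument (regressivity of $\lambda_1,\lambda_2$ via the factorization $(1+\mu\lambda_1)(1+\mu\lambda_2)=1-\alpha\mu+\beta\mu^2$, substitution of $e_{\lambda_i}(\cdot,t_0)$ using $e_{\lambda_i}^{\Delta}=\lambda_i e_{\lambda_i}$, and the Wronskian $(\lambda_2-\lambda_1)e_{\lambda_1}e_{\lambda_2}\neq 0$) is essentially the standard one found in that reference. No gaps.
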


\begin{tw}[Theorem~3.32 of \cite{BohnerDEOTS}]
\label{fund sys2}
Suppose that $\alpha^{2}-4\beta < 0$. Define $p=\frac{-\alpha}{2}$
and $q=\frac{\sqrt{4\beta-\alpha^{2}}}{2}$. If $p$ and $\mu\beta-\alpha$
are regressive, then a fundamental system of \eqref{eq1} is given by
$\cos_{\frac{q}{(1+\mu p)}}(\cdot,t_{0})e_{p}(\cdot,t_{0})$
and $\sin_{\frac{q}{(1+\mu p)}}(\cdot,t_{0})e_{p}(\cdot,t_{0})$,
where $t_{0}\in\mathbb{T}^{\kappa}$.
\end{tw}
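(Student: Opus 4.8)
The plan is to derive this from Theorem~\ref{fund sys} by passing from the complex pair of exponential solutions to suitable real combinations. First I would note that $\alpha^{2}-4\beta<0$ in particular gives $\alpha^{2}-4\beta\neq 0$, and that the hypothesis $\mu\beta-\alpha\in\mathcal{R}$ is precisely regressivity of \eqref{eq1}; hence Theorem~\ref{fund sys} applies. Since now $\sqrt{\alpha^{2}-4\beta}=i\sqrt{4\beta-\alpha^{2}}=2iq$, the two ``roots'' in that theorem are the complex-conjugate regressive functions $\lambda_{1}=p-iq$ and $\lambda_{2}=p+iq$ (they are regressive because $(1+\mu\lambda_{1})(1+\mu\lambda_{2})=1-\alpha\mu+\beta\mu^{2}\neq 0$ by regressivity of \eqref{eq1}), so $\{e_{\lambda_{1}}(\cdot,t_{0}),e_{\lambda_{2}}(\cdot,t_{0})\}$ is already a (complex) fundamental system.

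The second step is to re-express $e_{\lambda_{1}}$ and $e_{\lambda_{2}}$ through the real objects $e_{p}$, $\cos_{r}$, $\sin_{r}$ with $r:=q/(1+\mu p)$; note that $r$ is well defined and real since $p\in\mathcal{R}$ forces $1+\mu p\neq 0$, and that $\pm ir\in\mathcal{R}$ holds automatically for real $r$. I would use the law of exponents $e_{u}(\cdot,t_{0})\,e_{v}(\cdot,t_{0})=e_{u\oplus v}(\cdot,t_{0})$ from \cite{BohnerDEOTS}, where $(u\oplus v)(t)=u(t)+v(t)+\mu(t)u(t)v(t)$, together with the elementary computation $p\oplus(\pm ir)=p\pm ir(1+\mu p)=p\pm iq$. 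This gives $e_{\lambda_{2}}(\cdot,t_{0})=e_{p}(\cdot,t_{0})\,e_{ir}(\cdot,t_{0})$ and $e_{\lambda_{1}}(\cdot,t_{0})=e_{p}(\cdot,t_{0})\,e_{-ir}(\cdot,t_{0})$. Recalling the definitions $\cos_{r}=(e_{ir}+e_{-ir})/2$ and $\sin_{r}=(e_{ir}-e_{-ir})/(2i)$ from \cite{BohnerDEOTS}, I then obtain $e_{p}\cos_{r}=(e_{\lambda_{1}}+e_{\lambda_{2}})/2$ and $e_{p}\sin_{r}=(e_{\lambda_{2}}-e_{\lambda_{1}})/(2i)$, so both candidate functions are constant-coefficient linear combinations of the two solutions of the linear equation \eqref{eq1}, hence are themselves solutions, and are manifestly real valued.

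Finally I would establish linear independence via the Wronskian. The key remark is that if $(z_{1},z_{2})$ is obtained from $(y_{1},y_{2})$ by a constant invertible matrix $A$, then $W(z_{1},z_{2})=\det(A)\,W(y_{1},y_{2})$, which follows in one line from the definition of $W$ and linearity of the $\Delta$-derivative (items~1 and~2 of Theorem~\ref{tw:differpropdelta}). Applying this to the change of basis carrying $(e_{\lambda_{1}},e_{\lambda_{2}})$ to $(e_{p}\cos_{r},e_{p}\sin_{r})$, whose determinant equals $1/(2i)\neq 0$, yields $W(e_{p}\cos_{r}(\cdot,t_{0}),e_{p}\sin_{r}(\cdot,t_{0}))=(1/(2i))\,W(e_{\lambda_{1}}(\cdot,t_{0}),e_{\lambda_{2}}(\cdot,t_{0}))\neq 0$ on $\mathbb{T}^{\kappa}$, since $\{e_{\lambda_{1}}(\cdot,t_{0}),e_{\lambda_{2}}(\cdot,t_{0})\}$ is a fundamental system by Theorem~\ref{fund sys}; this is exactly what is required.

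I expect the main obstacle to be bookkeeping rather than depth: one has to make sure that the exponential calculus of \cite{BohnerDEOTS} (law of exponents, notion of regressivity, Theorem~\ref{fund sys}) is being applied legitimately to the complex-valued regressive functions $\lambda_{1},\lambda_{2}$, and one has to pin down the exact frequency $r=q/(1+\mu p)$, because it is precisely this value --- and not $q$ itself --- that makes $p\oplus(\pm ir)=p\pm iq$, the identity on which the whole argument rests. A fallback that avoids complex numbers altogether would be to verify directly, using the product rule and the derivative formulas $\cos_{r}^{\Delta}=-r\sin_{r}$, $\sin_{r}^{\Delta}=r\cos_{r}$ and $e_{p}^{\Delta}=p\,e_{p}$, that $e_{p}\cos_{r}$ and $e_{p}\sin_{r}$ each solve \eqref{eq1}, and then to compute their Wronskian by hand; I would keep this in reserve if a fully self-contained argument is preferred.
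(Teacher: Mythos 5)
This theorem is quoted verbatim from Theorem~3.32 of \cite{BohnerDEOTS}; the paper itself gives no proof, only the citation. Your argument is correct and is essentially the textbook's own route: pass through the complex roots $\lambda_{1,2}=p\mp iq$ of Theorem~\ref{fund sys}, use $p\oplus(\pm ir)=p\pm iq$ with $r=q/(1+\mu p)$ to factor $e_{\lambda_{1,2}}=e_{p}e_{\mp ir}$, recover $e_{p}\cos_{r}$ and $e_{p}\sin_{r}$ as constant linear combinations, and conclude via the determinant-$\tfrac{1}{2i}$ change of basis on the Wronskian.
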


\begin{tw}[Theorem~3.34 of \cite{BohnerDEOTS}]
\label{fund sys3}
Suppose $\alpha^{2}-4\beta = 0$. Define $p=\frac{-\alpha}{2}$.
If $p\in\mathcal{R}$, then a fundamental system of \eqref{eq1} is given by
$$
e_{p}(t,t_{0})
\quad \hbox{  and   } \quad
e_{p}(t,t_{0})\int\limits_{t_{0}}^{t}\frac{1}{1+p\mu(\tau)}\Delta \tau,
$$
where $t_{0}\in\mathbb{T}^{\kappa}$.
\end{tw}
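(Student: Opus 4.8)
The plan is to proceed by direct verification: show that each of the two proposed functions satisfies the dynamic equation~\eqref{eq1}, then compute their Wronskian and check that it never vanishes on $\mathbb{T}^{\kappa}$; by the definition of a fundamental system this finishes the proof.

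First I would rewrite the hypothesis in terms of $p$. Since $\alpha^{2}-4\beta=0$ and $p=-\alpha/2$, we have $\alpha=-2p$ and $\beta=p^{2}$, so~\eqref{eq1} reads $y^{\Delta\Delta}-2p\,y^{\Delta}+p^{2}y=0$. For $y_{1}(t):=e_{p}(t,t_{0})$, note that $p$ is a real regressive constant, so $y_{1}^{\Delta}=p\,y_{1}$ and $y_{1}^{\Delta\Delta}=p^{2}\,y_{1}$; substituting gives $p^{2}y_{1}-2p(p\,y_{1})+p^{2}y_{1}=0$, hence $y_{1}$ solves~\eqref{eq1}. For the second function write $y_{2}(t):=e_{p}(t,t_{0})\,z(t)$ with $z(t):=\int_{t_{0}}^{t}\frac{1}{1+p\mu(\tau)}\,\Delta\tau$; this integral is well defined precisely because $p\in\mathcal{R}$ forces $1+p\mu(\tau)\neq 0$, and then $z^{\Delta}(t)=\frac{1}{1+p\mu(t)}$. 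Using the product rule (item~3 of Theorem~\ref{tw:differpropdelta}) together with the shift identity $e_{p}^{\sigma}=(1+p\mu)e_{p}$ (item~2 of Theorem~\ref{properties_exp_delta}), I would compute
\[
y_{2}^{\Delta}=p\,e_{p}\,z+e_{p}^{\sigma}\,\frac{1}{1+p\mu}=e_{p}\,(pz+1).
\]
Applying the product rule a second time to $e_{p}\cdot(pz+1)$, again invoking $e_{p}^{\sigma}=(1+p\mu)e_{p}$ and $(pz+1)^{\Delta}=\frac{p}{1+p\mu}$, yields $y_{2}^{\Delta\Delta}=p\,e_{p}\,(pz+2)$. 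Substituting into $y_{2}^{\Delta\Delta}-2p\,y_{2}^{\Delta}+p^{2}y_{2}$ and grouping the coefficients of $e_{p}z$ and of $e_{p}$ separately, one gets $p^{2}-2p^{2}+p^{2}=0$ and $2p-2p=0$, so $y_{2}$ also solves~\eqref{eq1}.

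Finally, for the Wronskian I would use the expressions just obtained:
\[
W(y_{1},y_{2})(t)=y_{1}y_{2}^{\Delta}-y_{1}^{\Delta}y_{2}
=e_{p}\cdot e_{p}(pz+1)-p\,e_{p}\cdot e_{p}z
=e_{p}^{2}(t,t_{0}).
\]
Since $p$ is regressive, $e_{p}(\cdot,t_{0})$ is nowhere zero on $\mathbb{T}$, hence $W(y_{1},y_{2})(t)=e_{p}^{2}(t,t_{0})\neq 0$ for every $t\in\mathbb{T}^{\kappa}$, and therefore $\{y_{1},y_{2}\}$ is a fundamental system for~\eqref{eq1}. (As a sanity check, on $\mathbb{T}=\mathbb{R}$ one has $\mu\equiv 0$, so $z(t)=t-t_{0}$ and one recovers the classical repeated-root pair $e^{p(t-t_{0})}$ and $(t-t_{0})e^{p(t-t_{0})}$.) The argument is elementary; the only points requiring a little care are checking that $z$ is well defined and twice $\Delta$-differentiable, and keeping the bookkeeping straight when the product rule and the identity $e_{p}^{\sigma}=(1+p\mu)e_{p}$ are applied twice in succession — there is no genuine obstacle beyond that.
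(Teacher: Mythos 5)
Your verification is correct: the paper itself gives no proof of this statement (it is quoted as Theorem~3.34 of \cite{BohnerDEOTS}), and your direct check — substituting $\alpha=-2p$, $\beta=p^{2}$, computing $y_{2}^{\Delta}=e_{p}(pz+1)$ and $y_{2}^{\Delta\Delta}=p\,e_{p}(pz+2)$ via the product rule and $e_{p}^{\sigma}=(1+p\mu)e_{p}$, and then evaluating $W(y_{1},y_{2})=e_{p}^{2}(\cdot,t_{0})\neq 0$ — is exactly the elementary argument one would give, and all the computations are right. Two small points worth making explicit: first, the hypothesis $p\in\mathcal{R}$ is precisely the regressivity of \eqref{eq1} in this degenerate case, since $1-\alpha\mu+\beta\mu^{2}=(1+p\mu)^{2}$, so the setting of the fundamental-system definition applies; second, the nonvanishing of $e_{p}(\cdot,t_{0})$ used at the end follows from item~3 of Theorem~\ref{properties_exp_delta}, $1/e_{p}(t,s)=e_{\ominus p}(t,s)$, and the integrand $1/(1+p\mu(\tau))$ is rd-continuous because $\mu$ is, so $z$ is indeed well defined and $\Delta$-differentiable with $z^{\Delta}=1/(1+p\mu)$. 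With those remarks your argument is complete; it agrees with (a streamlined version of) the standard proof in \cite{BohnerDEOTS}, which obtains the second solution by reduction of order rather than by guessing it and verifying.
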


\begin{tw}[Theorem~3.7 of \cite{BohnerDEOTS}]
\label{general sol}
If functions $y_{1}$ and $y_{2}$ form
a fundamental system of solutions for \eqref{eq1}, then
$y(t)=\alpha y_{1}(t)+ \beta y_{2}(t)$,
where $\alpha,\beta$ are constants, is a general solution to \eqref{eq1},
i.e., every function of this form is a solution to \eqref{eq1}
and every solution of \eqref{eq1} is of this form.
\end{tw}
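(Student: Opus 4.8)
The plan is to prove the two halves of the equivalence separately. To keep the combination constants distinct from the coefficients $\alpha,\beta$ of \eqref{eq1}, I write the candidate general solution as $y=c_{1}y_{1}+c_{2}y_{2}$. For the first half, applying items~1 and~2 of Theorem~\ref{tw:differpropdelta} twice gives $y^{\Delta}=c_{1}y_{1}^{\Delta}+c_{2}y_{2}^{\Delta}$ and $y^{\Delta\Delta}=c_{1}y_{1}^{\Delta\Delta}+c_{2}y_{2}^{\Delta\Delta}$; substituting into the left-hand side of \eqref{eq1} and regrouping yields
\[
y^{\Delta\Delta}+\alpha y^{\Delta}+\beta y
=c_{1}\bigl(y_{1}^{\Delta\Delta}+\alpha y_{1}^{\Delta}+\beta y_{1}\bigr)
+c_{2}\bigl(y_{2}^{\Delta\Delta}+\alpha y_{2}^{\Delta}+\beta y_{2}\bigr)=0 ,
\]
since $y_{1}$ and $y_{2}$ solve \eqref{eq1}. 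Hence every function of the asserted form is a solution.

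For the converse, let $y$ be an arbitrary solution of \eqref{eq1} and fix any $t_{0}\in\mathbb{T}^{\kappa}$. I would then solve the linear algebraic system in the unknowns $c_{1},c_{2}$,
\[
c_{1}y_{1}(t_{0})+c_{2}y_{2}(t_{0})=y(t_{0}) ,
\qquad
c_{1}y_{1}^{\Delta}(t_{0})+c_{2}y_{2}^{\Delta}(t_{0})=y^{\Delta}(t_{0}) .
\]
Its coefficient determinant is exactly $W(y_{1},y_{2})(t_{0})$, which is nonzero because $y_{1},y_{2}$ form a fundamental system; therefore the system has a unique solution $(c_{1},c_{2})$. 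With these constants, set $z:=c_{1}y_{1}+c_{2}y_{2}$. By the first half $z$ solves \eqref{eq1}, and by construction $z$ and $y$ have the same value and the same $\Delta$-derivative at $t_{0}$. Invoking uniqueness of solutions of the initial value problem for the regressive equation \eqref{eq1} then forces $z\equiv y$ on $\mathbb{T}$, i.e. $y=c_{1}y_{1}+c_{2}y_{2}$, as required.

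The one ingredient not already available above is this uniqueness statement for the second-order initial value problem; I expect it to be the only real obstacle, the rest being the linearity of the $\Delta$-derivative and $2\times2$ linear algebra. I would obtain it by rewriting \eqref{eq1} as the first-order linear system $x^{\Delta}=Ax$ with $x=\left(\begin{smallmatrix}y\\ y^{\Delta}\end{smallmatrix}\right)$ and $A=\left(\begin{smallmatrix}0&1\\ -\beta&-\alpha\end{smallmatrix}\right)$, noting that $I+\mu(t)A$ has determinant $1-\alpha\mu(t)+\beta\mu^{2}(t)$, so that $A$ is regressive precisely under the regressivity hypothesis imposed on \eqref{eq1}; uniqueness for $x^{\Delta}=Ax$ with prescribed $x(t_{0})$ then follows from the standard existence–uniqueness theorem for regressive first-order linear dynamic systems (the matrix analogue of the quoted Theorem~2.62 of \cite{BohnerDEOTS}; see also Theorem~3.1 of \cite{BohnerDEOTS}).
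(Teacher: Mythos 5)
The paper does not prove this statement at all: it is quoted verbatim as Theorem~3.7 of \cite{BohnerDEOTS} and used as an imported black box, so there is no in-paper argument to compare yours against. That said, your proof is correct and is the standard textbook argument. The forward direction is exactly linearity of the $\Delta$-derivative (items~1 and~2 of Theorem~\ref{tw:differpropdelta}); the converse correctly combines the nonvanishing of $W(y_{1},y_{2})(t_{0})$ with uniqueness for the initial value problem, and your computation $\det(I+\mu(t)A)=1-\alpha\mu(t)+\beta\mu^{2}(t)$ matches precisely the regressivity condition the paper attaches to \eqref{eq1}, so the reduction to a regressive first-order linear system is legitimate. The only point worth flagging is that the statement as transcribed in the paper does not explicitly repeat the regressivity hypothesis (it is a standing assumption in the source); your converse genuinely needs it, since without invertibility of $I+\mu(t)A$ one only gets forward uniqueness through right-scattered points, and then $z$ and $y$ could in principle differ to the left of $t_{0}$. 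Since you invoke that hypothesis explicitly, your argument is complete modulo citing the matrix existence--uniqueness theorem, which is indeed available in \cite{BohnerDEOTS}.
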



\subsection{Calculus of variations on time scales}
\label{CV}

Consider the following problem of the calculus
of variations on time scales:
\begin{equation}
\label{problem}
\mathcal{L}(y)=\int\limits_{a}^{b}
L(t,y(t),y^{\Delta}(t))\Delta t \longrightarrow \min
\end{equation}
subject to the boundary conditions
\begin{equation}
\label{bcproblem}
y(a)=y_{a}, \quad\quad y(b)=y_{b},
\end{equation}
where $L:[a,b]_{\mathbb{T}}^{\kappa}\times\mathbb{R}^{2}\rightarrow\mathbb{R}$,
$(t,y,v) \mapsto L(t,y,v)$, is a given function, and $y_{a}$, $y_{b}\in\mathbb{R}$.

\begin{df}
A function $y \in C^{1}_{rd}([a,b]_{\mathbb{T}},\mathbb{R})$ is said to
be an admissible path to problem \eqref{problem} if it satisfies
the given boundary conditions \eqref{bcproblem}.
\end{df}

We assume that $L(t,\cdot,\cdot)$ is differentiable in $(y,v)$;
$L(t,\cdot,\cdot)$, $L_{y}(t,\cdot,\cdot)$ and $L_{v}(t,\cdot,\cdot)$
are continuous at $(y,y^{\Delta})$ uniformly at $t$
and rd-continuously at $t$ for any admissible path;
the functions $L(\cdot,y(\cdot),y^{\Delta}(\cdot))$, $L_{y}(\cdot,y(\cdot),y^{\Delta}(\cdot))$
and $L_{v}(\cdot,y(\cdot),y^{\Delta}(\cdot))$ are
$\Delta$-integrable on $[a,b]$ for any admissible path $y$.

\begin{df}
We say that an admissible function $\hat{y}$ is a local minimizer
to problem \eqref{problem}--\eqref{bcproblem} if there exists $\delta >0$
such that $\mathcal{L}(\hat{y})\le\mathcal{L}(y)$ for all admissible
functions $y\in C^{1}_{rd}$ satisfying the inequality $||y-\hat{y}||<\delta$.
The following norm in $C^{1}_{rd}$ is considered:
\begin{equation*}
||y||:=\sup\limits_{t\in [a,b]^{\kappa}_{\mathbb{T}}} |y(t)|
+\sup\limits_{t\in [a,b]^{\kappa}_{\mathbb{T}}} \left|y^{\Delta}(t)\right|.
\end{equation*}
\end{df}

\begin{tw}[Corollary~1 of \cite{OptCondHigherDelta}]
\label{corE-Leq}
If $y$ is a local minimizer to problem \eqref{problem}--\eqref{bcproblem},
then $y$ satisfies the Euler--Lagrange equation
\begin{equation}
\label{E-L:eq:T}
L_{v}(t,y(t),y^{\Delta}(t))
=\int\limits_{a}^{\sigma(t)}
L_{y}(\tau,y(\tau),y^{\Delta}(\tau))\Delta\tau +c
\end{equation}
for some constant $c\in\mathbb{R}$ and all
$t\in \left[a,b\right]^{\kappa}_{\mathbb{T}}$.
\end{tw}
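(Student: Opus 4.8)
The plan is to run the classical first-variation argument, transcribed to the time-scale setting. First I would fix an arbitrary variation $\eta\in C^1_{rd}([a,b]_{\mathbb{T}},\mathbb{R})$ with $\eta(a)=\eta(b)=0$, so that for every $\varepsilon\in\mathbb{R}$ the path $y+\varepsilon\eta$ is admissible for \eqref{problem}--\eqref{bcproblem}, and set $\phi(\varepsilon):=\mathcal{L}(y+\varepsilon\eta)$. Since $\|(y+\varepsilon\eta)-y\|=|\varepsilon|\,\|\eta\|$, the local minimality of $y$ makes $\varepsilon=0$ a local minimum of the real-valued, differentiable function $\phi$, hence $\phi'(0)=0$. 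The first — and, I expect, the most delicate — step is to justify differentiating under the $\Delta$-integral sign; this is exactly what the standing hypotheses on $L$ (continuity of $L$, $L_y$, $L_v$ in $(y,v)$ uniformly in $t$, rd-continuity in $t$ along admissible paths, and $\Delta$-integrability) are tailored to permit, via a mean-value estimate rather than a naive interchange of limit and integral. Granting this, one obtains
\begin{equation*}
0=\phi'(0)=\int_a^b\Bigl[L_y(t,y(t),y^{\Delta}(t))\,\eta(t)+L_v(t,y(t),y^{\Delta}(t))\,\eta^{\Delta}(t)\Bigr]\Delta t .
\end{equation*}

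The next step is to trade the term in $\eta$ for a term in $\eta^{\Delta}$. Put $p(t):=\int_a^t L_y(\tau,y(\tau),y^{\Delta}(\tau))\,\Delta\tau$; by the hypotheses $L_y(\cdot,y(\cdot),y^{\Delta}(\cdot))$ is rd-continuous, so $p$ is a (continuous) antiderivative of it with $p(a)=0$, and $p^{\sigma}(t)=\int_a^{\sigma(t)}L_y(\tau,y(\tau),y^{\Delta}(\tau))\,\Delta\tau$ is again rd-continuous. Applying the integration-by-parts formula of item~7 of Theorem~\ref{intpropdelta} with $f=p$, $g=\eta$, and using $\eta(a)=\eta(b)=0$, I get
\begin{equation*}
\int_a^b L_y(t,y(t),y^{\Delta}(t))\,\eta(t)\,\Delta t
=\bigl[p(t)\eta(t)\bigr]_a^b-\int_a^b p^{\sigma}(t)\,\eta^{\Delta}(t)\,\Delta t
=-\int_a^b p^{\sigma}(t)\,\eta^{\Delta}(t)\,\Delta t .
\end{equation*}
Substituting this back into the necessary condition turns it into
\begin{equation*}
\int_a^b\Bigl[L_v(t,y(t),y^{\Delta}(t))-\int_a^{\sigma(t)}L_y(\tau,y(\tau),y^{\Delta}(\tau))\,\Delta\tau\Bigr]\eta^{\Delta}(t)\,\Delta t=0
\end{equation*}
for every such $\eta$.

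Finally I would invoke the fundamental lemma of the calculus of variations on time scales, of du Bois-Reymond type (see \cite{Girejko,Malinowska,Martins}): if $g\in C_{rd}([a,b]_{\mathbb{T}},\mathbb{R})$ satisfies $\int_a^b g(t)\eta^{\Delta}(t)\,\Delta t=0$ for all $\eta\in C^1_{rd}$ with $\eta(a)=\eta(b)=0$, then $g$ is constant on $[a,b]^{\kappa}_{\mathbb{T}}$. Its proof is short: with $c:=\frac{1}{b-a}\int_a^b g(t)\,\Delta t$ and $\eta(t):=\int_a^t(g(\tau)-c)\,\Delta\tau$, one has $\eta\in C^1_{rd}$ and $\eta(a)=\eta(b)=0$, whence $\int_a^b(g(t)-c)^2\,\Delta t=\int_a^b g(t)\,\eta^{\Delta}(t)\,\Delta t=0$, and the positivity property in item~8 of Theorem~\ref{intpropdelta} together with rd-continuity of $g$ forces $g\equiv c$. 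Applying this lemma to the rd-continuous bracket above yields a constant $c$ with
\begin{equation*}
L_v(t,y(t),y^{\Delta}(t))-\int_a^{\sigma(t)}L_y(\tau,y(\tau),y^{\Delta}(\tau))\,\Delta\tau=c
\qquad\text{for all } t\in[a,b]^{\kappa}_{\mathbb{T}},
\end{equation*}
which is precisely \eqref{E-L:eq:T}.

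The only genuinely technical obstacle in this scheme is the differentiation-under-the-$\Delta$-integral step of the first paragraph, which must be carried out carefully using the uniform-continuity hypotheses on $L$ and its partials; by contrast, the integration by parts and the du Bois-Reymond lemma are routine given the machinery already recorded in Theorem~\ref{intpropdelta}.
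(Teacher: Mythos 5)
Your proof is correct; the paper itself offers no proof of Theorem~\ref{corE-Leq}, importing it verbatim as Corollary~1 of \cite{OptCondHigherDelta}, and your argument --- first variation of $\varepsilon\mapsto\mathcal{L}(y+\varepsilon\eta)$, integration by parts via item~7 of Theorem~\ref{intpropdelta}, and the du Bois--Reymond lemma --- is exactly the standard derivation used in that reference and in the time-scale calculus of variations literature. The only delicate point, differentiating under the $\Delta$-integral sign, is one you correctly identify and attribute to the standing uniform-continuity and rd-continuity hypotheses on $L$, $L_y$, $L_v$, which is precisely their role.
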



\section{Economical model}
\label{model}

The inflation rate, $p$, affects decisions of the society regarding consumption and saving,
and therefore aggregated demand for domestic production, which in turn affects the rate
of unemployment, $u$. A relationship between the inflation rate and the rate of unemployment
is described by the Phillips curve, the most commonly used term in the analysis of inflation
and unemployment \cite{Samuelson}. Having a Phillips tradeoff between $u$ and $p$,
what is then the best combination of inflation and unemployment over time?
To answer this question, we follow here the formulations presented in \cite{ChiangEDO,Taylor}.
The Phillips tradeoff between $u$ and $p$ is defined as
\begin{equation}
\label{inflation}
p := -\beta u+\pi , \quad \beta >0,
\end{equation}
where $\pi$ is the expected rate of inflation that is captured by the equation
\begin{equation}
\label{expected}
\pi'=j(p-\pi), \quad 0< j \le 1.
\end{equation}
The government loss function, $\lambda$,
is specified in the following quadratic form:
\begin{equation}
\label{loss function}
\lambda = u^{2} + \alpha p^{2},
\end{equation}
where $\alpha>0$ is the weight attached to government's distaste for
inflation relative to the loss from income deviating from its equilibrium level.
Combining \eqref{inflation} and \eqref{expected},
and substituting the result into \eqref{loss function}, we obtain that
\begin{equation*}
\lambda\left(\pi(t),\pi'(t)\right)
=\left(\frac{\pi'(t)}{\beta j}\right)^{2}
+\alpha \left(\frac{\pi'(t)}{j}+\pi(t)\right)^{2},
\end{equation*}
where $\alpha$, $\beta$, and $j$ are real positive parameters
that describe the relations between all variables that occur in the model \cite{Taylor}.
The problem consists to find the optimal path $\pi$ that minimizes the total social loss
over the time interval $[0, T]$. The initial and the terminal values of $\pi$,
$\pi_{0}$ and $\pi_{T}$, respectively, are given with $\pi_{0},\pi_{T}>0$.
To recognize the importance of the present over the future,
all social losses are discounted to their present values via a positive discount rate $\delta$.
Two models are available in the literature: the \emph{continuous model}
\begin{equation}
\label{total_social_loss_con}
\Lambda_{C}(\pi)=\int\limits_{0}^{T}\lambda(\pi(t),\pi'(t))e^{-\delta t} dt\longrightarrow \min,
\end{equation}
subject to given boundary conditions
\begin{equation}
\label{eq:bc:cdm}
\pi(0)=\pi_{0}, \quad\pi(T)=\pi_{T},
\end{equation}
and the \emph{discrete model}
\begin{equation}
\label{total_social_loss_disc}
\Lambda_{D}(\pi)=\sum\limits_{t=0}^{T-1}
\lambda(\pi(t),\Delta\pi(t))(1+\delta)^{-t}\longrightarrow \min,
\end{equation}
also subject to given boundary conditions \eqref{eq:bc:cdm}.
In both cases \eqref{total_social_loss_con} and \eqref{total_social_loss_disc},
\begin{equation}
\label{def:lambda}
\lambda(t,\pi,\upsilon) := \left(\frac{\upsilon}{\beta j}\right)^{2}
+\alpha\left(\frac{\upsilon}{j}+\pi\right)^{2}.
\end{equation}
Here we propose the more general \emph{time-scale model}
\begin{equation}
\label{total:social:loss:scale}
\Lambda_{\mathbb{T}}(\pi)=\int\limits_{0}^{T}\lambda(t,\pi(t),\pi^{\Delta}(t))
e_{\ominus\delta}(t,0)\Delta t\longrightarrow \min
\end{equation}
subject to boundary conditions \eqref{eq:bc:cdm}
and with $\lambda$ defined by \eqref{def:lambda}.
Clearly, the time-scale model includes both the discrete
and continuous models as special cases:
our time-scale functional \eqref{total:social:loss:scale}
reduces to \eqref{total_social_loss_con} when $\mathbb{T} = \mathbb{R}$
and to \eqref{total_social_loss_disc} when $\mathbb{T} = \mathbb{Z}$.


\section{Main results}
\label{main:results}

Standard dynamic economic models are set up in either continuous or discrete time.
Since time scale calculus can be used to model dynamic processes whose time domains
are more complex than the set of integers or real numbers, the use of time scales in
economy is a flexible and capable modelling technique.
In this section we show the advantage of using \eqref{total:social:loss:scale}
with the periodic time scale. We begin by obtaining in Section~\ref{main:theory}
a necessary and also a sufficient optimality condition
for our economic model \eqref{total:social:loss:scale}:
Theorem~\ref{cor1} and Theorem~\ref{global}, respectively.
For $\mathbb{T} = h\mathbb{Z}$, $h > 0$, the explicit solution
$\hat{\pi}$ to the problem \eqref{total:social:loss:scale}
subject to \eqref{eq:bc:cdm} is given (Theorem~\ref{th:delf}).
Afterwards, we use such results with empirical data
(Section~\ref{main:empirical}).


\subsection{Theoretical results}
\label{main:theory}

Let us consider the problem
\begin{equation}
\label{mainProblem}
\mathcal{L}(\pi)=\int\limits_{0}^{T}L(t,\pi(t),\pi^{\Delta}(t))\Delta t\longrightarrow \min
\end{equation}
subject to boundary conditions
\begin{equation}
\label{boun:con}
\pi(0)=\pi_{0}, \quad \pi(T)=\pi_{T}.
\end{equation}
As explained in Section~\ref{model}, we are particularly interested
in the situation where
\begin{equation}
\label{eq:pii}
L(t,\pi(t),\pi^{\Delta}(t))=\left[\left(\frac{\pi^{\Delta}(t)}{\beta j}\right)^{2}
+\alpha\left(\frac{\pi^{\Delta}(t)}{j}+\pi(t)\right)^{2}\right]e_{\ominus\delta}(t,0).
\end{equation}
For simplicity, in the sequel we use the notation
$[\pi](t) := (t,\pi(t),\pi^{\Delta}(t))$.

\begin{tw}
\label{cor1}
If $\hat{\pi}$ is a local minimizer to problem \eqref{mainProblem}--\eqref{boun:con}
and the graininess function $\mu$ is a $\Delta$-differentiable function
on $[0,T]^{\kappa}_{\mathbb{T}}$, then $\hat{\pi}$ satisfies the Euler--Lagrange equation
\begin{equation}
\label{E-LDelta}
\left(L_{v}[\pi](t)\right)^{\Delta}
=\left(1+\mu^{\Delta}(t)\right) L_{y}[\pi](t)
+\mu^{\sigma}(t)\left(L_{y}[\pi](t)\right)^{\Delta}
\end{equation}
for all $t\in\ \left[0,T\right]^{\kappa^{2}}_{\mathbb{T}}$.
\end{tw}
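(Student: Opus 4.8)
The plan is to start from the integral form of the Euler--Lagrange equation already available to us, namely Theorem~\ref{corE-Leq} applied to the Lagrangian $L$ in \eqref{mainProblem}. That gives, for every $t\in[0,T]^{\kappa}_{\mathbb{T}}$,
\begin{equation*}
L_{v}[\pi](t)=\int\limits_{0}^{\sigma(t)}L_{y}[\pi](\tau)\,\Delta\tau+c
\end{equation*}
for some constant $c\in\mathbb{R}$. The idea is then to differentiate both sides with respect to $t$ and massage the right-hand side into the stated form. The left-hand side simply becomes $\bigl(L_{v}[\pi](t)\bigr)^{\Delta}$, provided this $\Delta$-derivative exists; since the right-hand side is $\Delta$-differentiable (being an antiderivative composed with $\sigma$ plus a constant), the equality forces differentiability of the left-hand side as well, so no extra regularity hypothesis is needed beyond what is assumed for $L$ and the stated $\Delta$-differentiability of $\mu$.

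Next I would handle the right-hand side $G(t):=\int_{0}^{\sigma(t)}L_{y}[\pi](\tau)\,\Delta\tau$. Writing $H(t):=\int_{0}^{t}L_{y}[\pi](\tau)\,\Delta\tau$, we have $H^{\Delta}(t)=L_{y}[\pi](t)$ and $G(t)=H(\sigma(t))=H^{\sigma}(t)$. The key technical ingredient is a formula for $\bigl(H^{\sigma}\bigr)^{\Delta}$ in terms of $H^{\Delta}$ and $\mu^{\Delta}$. Using item~4 of Theorem~\ref{differentiation}, $H^{\sigma}(t)=H(t)+\mu(t)H^{\Delta}(t)$, so
\begin{equation*}
\bigl(H^{\sigma}\bigr)^{\Delta}(t)
=H^{\Delta}(t)+\bigl(\mu\,H^{\Delta}\bigr)^{\Delta}(t).
\end{equation*}
Now I apply the product rule, Theorem~\ref{tw:differpropdelta} item~3, to $\mu\,H^{\Delta}$. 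Choosing the variant $(fg)^{\Delta}=f^{\Delta}g^{\sigma}+f g^{\Delta}$ with $f=\mu$ and $g=H^{\Delta}$ gives
\begin{equation*}
\bigl(\mu\,H^{\Delta}\bigr)^{\Delta}(t)
=\mu^{\Delta}(t)\bigl(H^{\Delta}\bigr)^{\sigma}(t)+\mu(t)\bigl(H^{\Delta}\bigr)^{\Delta}(t).
\end{equation*}
Substituting $H^{\Delta}(t)=L_{y}[\pi](t)$ and $\bigl(H^{\Delta}\bigr)^{\sigma}(t)=L_{y}^{\sigma}[\pi](t)$, and then rewriting $L_{y}^{\sigma}[\pi](t)=L_{y}[\pi](t)+\mu(t)\bigl(L_{y}[\pi]\bigr)^{\Delta}(t)$ once more via item~4 of Theorem~\ref{differentiation}, I would collect terms. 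The $\mu^{\Delta}\,L_{y}$ contribution combines with the leading $L_{y}$ to produce $(1+\mu^{\Delta})L_{y}$, while the $\mu^{\Delta}\mu\,(L_{y})^{\Delta}$ term combines with $\mu\,(L_{y})^{\Delta}$ to give $(\mu+\mu\mu^{\Delta})(L_{y})^{\Delta}=\mu\cdot(1+\mu^{\Delta})(L_{y})^{\Delta}$; since $\mu^{\sigma}(t)=\mu(t)+\mu(t)\mu^{\Delta}(t)=\mu(t)(1+\mu^{\Delta}(t))$ by item~4 applied to $\mu$ itself, this is exactly $\mu^{\sigma}(t)(L_{y}[\pi](t))^{\Delta}$, yielding \eqref{E-LDelta}.

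The main obstacle is bookkeeping rather than conceptual: one must be careful that all the $\Delta$-derivatives invoked actually exist on the relevant set. This is where the hypothesis that $\mu$ is $\Delta$-differentiable on $[0,T]^{\kappa}_{\mathbb{T}}$ enters, and it is also why the conclusion is stated only on $[0,T]^{\kappa^{2}}_{\mathbb{T}}$ --- differentiating a function defined on $[0,T]^{\kappa}_{\mathbb{T}}$ costs one further $\kappa$. I would also need to note that $H$, being an antiderivative of the rd-continuous function $L_{y}[\pi]$, is $\Delta$-differentiable, so that the product rule applies legitimately to $\mu H^{\Delta}$; the smoothness of $L_{y}[\pi]$ along the admissible path $\pi$ is guaranteed by the standing assumptions on $L$ made just before Theorem~\ref{corE-Leq}. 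Once existence is secured, the computation above is a short and essentially forced chain of the product rule and the identity $f^{\sigma}=f+\mu f^{\Delta}$.
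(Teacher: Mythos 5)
Your argument is correct and follows essentially the same route as the paper: both start from the integral Euler--Lagrange equation of Theorem~\ref{corE-Leq}, rewrite $\int_0^{\sigma(t)}L_y[\pi](\tau)\Delta\tau$ as $\int_0^t L_y[\pi](\tau)\Delta\tau+\mu(t)L_y[\pi](t)$ (you justify this via $H^{\sigma}=H+\mu H^{\Delta}$ applied to the antiderivative, the paper via Theorem~\ref{eqDelta1}; the two are equivalent here), and then $\Delta$-differentiate, the product rule on $\mu\, L_y[\pi]$ producing the terms $(1+\mu^{\Delta})L_y[\pi]$ and $\mu^{\sigma}(L_y[\pi])^{\Delta}$. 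Your product-rule bookkeeping is a little more roundabout than necessary --- the variant $(fg)^{\Delta}=f^{\Delta}g+f^{\sigma}g^{\Delta}$ with $f=\mu$ and $g=L_y[\pi]$ gives $\mu^{\Delta}L_y[\pi]+\mu^{\sigma}(L_y[\pi])^{\Delta}$ in one line, without the detour through $L_y^{\sigma}$ and the identity $\mu^{\sigma}=\mu(1+\mu^{\Delta})$ --- but the computation is correct.
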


\begin{proof}
If $\hat{\pi}$ is a local minimizer to \eqref{mainProblem}--\eqref{boun:con},
then, by Theorem~\ref{corE-Leq}, $\hat{\pi}$ satisfies the following equation:
\begin{equation*}
L_{v}[\pi](t)=\int\limits_{0}^{\sigma(t)}L_{y}[\pi](\tau)\Delta\tau+c.
\end{equation*}
Using the properties of the $\Delta$-integral (see Theorem~\ref{eqDelta1}),
we can write that $\hat{\pi}$ satisfies
\begin{equation}
\label{eq:aux1}
L_{v}[\pi](t)=\int\limits_{0}^{t}L_{y}[\pi](\tau)\Delta\tau +\mu(t)L_{y}[\pi](t) + c.
\end{equation}
Taking the $\Delta$-derivative to both sides of \eqref{eq:aux1},
we obtain equation \eqref{E-LDelta}.
\end{proof}

Using Theorem~\ref{cor1}, we can immediately write the classical
Euler--Lagrange equations for the continuous \eqref{total_social_loss_con}
and the discrete \eqref{total_social_loss_disc} models.

\begin{ex}
\label{E-L_con}
Let $\mathbb{T} = \mathbb{R}$. Then, $\mu \equiv 0$ and
\eqref{E-LDelta} with the Lagrangian \eqref{eq:pii} reduces to
\begin{equation}
\label{eq:EL:ex25}
\left(1+\alpha\beta^{2}\right)\pi^{\prime\prime}(t)
-\delta\left(1+\alpha\beta^{2}\right)\pi^{\prime}(t)
-\alpha j \beta^{2}\left(\delta+j\right)=0.
\end{equation}
This is the Euler--Lagrange equation
for the continuous model \eqref{total_social_loss_con}.
\end{ex}

\begin{ex}
\label{E-L disc}
Let $\mathbb{T} = \mathbb{Z}$. Then, $\mu \equiv 1$ and
\eqref{E-LDelta} with the Lagrangian \eqref{eq:pii} reduces to
\begin{equation}
\label{eq:EL:ex26}
\left(\alpha j\beta^{2}-\alpha\beta^{2}-1\right)\Delta^{2}\pi(t)
+\left(\alpha j^{2}\beta^{2}+\delta\alpha\beta+\delta\right) \Delta\pi(t)
+\alpha j\beta^{2} \left(\delta+j\right)\pi(t)=0.
\end{equation}
This is the Euler--Lagrange equation for the discrete model
\eqref{total_social_loss_disc}.
\end{ex}

\begin{cor}
\label{cor:ThZ}
Let $\mathbb{T} = h\mathbb{Z}$, $h > 0$, $\pi_{0}, \pi_{T} \in\mathbb{R}$, and $T = N h$
for a certain integer $N > 2 h$. If $\hat{\pi}$ is a solution to the problem
\begin{gather*}
\Lambda_{h}(\pi)=\sum\limits_{t=0}^{T-h}L(t,\pi(t),\pi^{\Delta}(t))h \longrightarrow \min,\\
\pi(0)=\pi_{0},\quad \pi(T)=\pi_{T},
\end{gather*}
then $\hat{\pi}$ satisfies the Euler--Lagrange equation
\begin{equation}
\label{eq:cor}
\left(L_{v}[\pi](t)\right)^{\Delta}=L_{y}[\pi](t)+h\left(L_{y}[\pi](t)\right)^{\Delta}
\end{equation}
for all $t\in \{0,\ldots, T-2h\}$.
\end{cor}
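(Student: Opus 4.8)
The plan is to derive Corollary~\ref{cor:ThZ} as a direct specialization of Theorem~\ref{cor1} to the time scale $\mathbb{T} = h\mathbb{Z}$, checking that the hypotheses of Theorem~\ref{cor1} are met and that the general Euler--Lagrange equation \eqref{E-LDelta} collapses to \eqref{eq:cor} in this setting. First I would observe that on $\mathbb{T} = h\mathbb{Z}$ the functional $\Lambda_h$ is exactly the $\Delta$-integral functional \eqref{mainProblem}: by Example~\ref{int hZ}, $\int_0^T L(t,\pi(t),\pi^\Delta(t))\,\Delta t = \sum_{k=0}^{N-1} L(kh,\pi(kh),\pi^\Delta(kh))\,h = \sum_{t=0}^{T-h} L(t,\pi(t),\pi^\Delta(t))\,h = \Lambda_h(\pi)$, with $\pi^\Delta$ given by the forward difference quotient \eqref{eq:delta:der:h}. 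Hence a solution $\hat\pi$ of the stated discrete problem is a local minimizer of \eqref{mainProblem}--\eqref{boun:con} and Theorem~\ref{cor1} applies, provided its graininess hypothesis holds.

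Next I would verify that hypothesis: on $h\mathbb{Z}$ the graininess is the constant function $\mu(t) = h$ for all $t$, so $\mu$ is trivially $\Delta$-differentiable with $\mu^\Delta(t) = 0$ and $\mu^\sigma(t) = \mu(\sigma(t)) = h$. Substituting $\mu^\Delta \equiv 0$ and $\mu^\sigma \equiv h$ into \eqref{E-LDelta} gives
\begin{equation*}
\left(L_{v}[\pi](t)\right)^{\Delta}
= \left(1+0\right) L_{y}[\pi](t) + h\left(L_{y}[\pi](t)\right)^{\Delta}
= L_{y}[\pi](t) + h\left(L_{y}[\pi](t)\right)^{\Delta},
\end{equation*}
which is precisely \eqref{eq:cor}.

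Finally I would check the domain of validity of the resulting equation. Theorem~\ref{cor1} asserts \eqref{E-LDelta} for $t \in [0,T]^{\kappa^2}_{\mathbb{T}}$. For $\mathbb{T} = h\mathbb{Z}$ with $T = Nh$, the set $[0,T]_{\mathbb{T}} = \{0, h, 2h, \ldots, Nh\}$; removing the supremum once gives $[0,T]^{\kappa}_{\mathbb{T}} = \{0, h, \ldots, (N-1)h\}$ and removing it a second time gives $[0,T]^{\kappa^2}_{\mathbb{T}} = \{0, h, \ldots, (N-2)h\} = \{0, 1, \ldots, N-2\}\cdot h$, i.e. exactly $\{0, \ldots, T-2h\}$. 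The condition $N > 2$ (stated in the corollary, ensuring $T - 2h \ge 0$ so that this set is nonempty) makes the statement non-vacuous. Collecting these observations yields the claim.

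I do not expect any serious obstacle here: the entire content is bookkeeping, namely recognizing $\Lambda_h$ as a $\Delta$-integral via Example~\ref{int hZ}, noting that constant graininess is $\Delta$-differentiable with vanishing derivative, and correctly iterating the $\kappa$ operation to pin down $\{0,\ldots,T-2h\}$. The only point demanding a little care is the last one — matching $[0,T]^{\kappa^2}_{\mathbb{T}}$ with the index set in the corollary statement — since an off-by-one in how many times the supremum is removed would misstate the range of $t$; but this is routine once the definition of $\mathbb{T}^\kappa$ is applied twice.
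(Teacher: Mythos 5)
Your proposal is correct and follows exactly the paper's route: the paper's proof is the one-line observation that the corollary ``follows from Theorem~\ref{cor1} by choosing $\mathbb{T}$ to be the periodic time scale $h\mathbb{Z}$,'' and you simply supply the details of that specialization (constant graininess, $\mu^{\Delta}\equiv 0$, $\mu^{\sigma}\equiv h$, and the identification of $[0,T]^{\kappa^{2}}_{\mathbb{T}}$ with $\{0,\ldots,T-2h\}$), all of which check out.
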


\begin{proof}
Follows from Theorem~\ref{cor1} by choosing $\mathbb{T}$
to be the periodic time scale $h\mathbb{Z}$.
\end{proof}

\begin{ex}
\label{eq:exQNI}
The Euler--Lagrange equation for problem
\eqref{total:social:loss:scale} on $\mathbb{T} = h\mathbb{Z}$ is given by \eqref{eq:cor}:
\begin{equation}
\label{E-LeqhZ}
(1+\alpha\beta^{2}-\alpha\beta^{2}j h)\pi^{\Delta\Delta}
+(-\delta-\alpha\beta^{2}\delta
-\alpha\beta^{2}j^{2}h) \pi^{\Delta}
+ (-\alpha\beta^{2}\delta j-\alpha\beta^{2}j^{2})\pi = 0.
\end{equation}
Assume that $1+\alpha\beta^{2}-\alpha\beta^{2}j h\neq 0$.
Then equation \eqref{E-LeqhZ} is regressive and we can use
the well known theorems in the theory of dynamic equations on time scales
(see Section~\ref{equations}), in order to find its general solution.
Introducing the quantities
\begin{equation}
\label{eq:O:A:B}
\Omega := 1+\alpha\beta^{2}-\alpha\beta^{2}jh,
\quad A := -\left(\delta+\alpha\beta^{2}\delta+\alpha\beta^{2}j^{2}h\right),
\quad B := \alpha\beta^{2}j(\delta +j),
\end{equation}
we rewrite equation \eqref{E-LeqhZ} as
\begin{equation}
\label{eqConstDelta}
\pi^{\Delta\Delta}+\frac{A}{\Omega}\pi^{\Delta}-\frac{B}{\Omega}\pi=0.
\end{equation}
The characteristic equation for \eqref{eqConstDelta} is
$$
\varphi(\lambda)=\lambda^{2}+\frac{A}{\Omega}\lambda-\frac{B}{\Omega}=0
$$
with determinant
\begin{equation}
\label{determinant}
\zeta=\frac{A^{2}+4B\Omega}{\Omega^{2}}.
\end{equation}
In general we have three different cases depending on the sign of the determinant $\zeta$:
$\zeta >0$, $\zeta=0$ and $\zeta <0$. However, with our assumptions on the parameters, simple computations
show that the last case cannot occur. Therefore, we consider the two possible cases:
\begin{enumerate}
\item If $\zeta>0$, then we have two different characteristic roots:
\begin{equation*}
\lambda_{1}=\frac{-A+\sqrt{A^{2}+4B\Omega}}{2\Omega} >0
\hbox{  and  }\lambda_{2}=\frac{-A-\sqrt{A^{2}+4B\Omega}}{2\Omega}<0,
\end{equation*}
and by Theorem~\ref{fund sys} and Theorem~\ref{general sol} we get that
\begin{equation}
\label{general solution}
\pi(t)=C_{1}e_{\lambda_{1}}(t,0)+C_{2}e_{\lambda_{2}}(t,0)
\end{equation}
is the general solution to \eqref{eqConstDelta},
where $C_{1}$ and $C_{2}$ are constants
determined using the given boundary conditions \eqref{eq:bc:cdm}.
Using \eqref{exp:in:hZ}, we rewrite \eqref{general solution} as
\begin{equation*}
\pi(t)=C_{1}\left(1+\lambda_{1}h\right)^{\frac{t}{h}}
+C_{2}\left(1+\lambda_{2}h\right)^{\frac{t}{h}}.
\end{equation*}

\item If $\zeta=0$, then by Theorems~\ref{fund sys3} and \ref{general sol} we get that
\begin{equation}
\label{general solution2}
\pi(t)=K_{1}e_{p}(t,0)+K_{2}e_{p}(t,0)\int\limits_{0}^{t}\frac{\Delta\tau}{1+p\mu(\tau)}
\end{equation}
is the general solution to \eqref{eqConstDelta},
where $K_{1}$ and $K_{2}$ are constants, determined using
the given boundary conditions \eqref{eq:bc:cdm},
and $p=-\frac{A}{2 \Omega} \in \mathcal{R}$.
Using Example~\ref{int hZ} and \eqref{exp:in:hZ},
we rewrite \eqref{general solution2} as
$$
\pi(t)= K_{1} \left(1-\frac{A}{2\Omega}h\right)^{\frac{t}{h}}
+K_{2} \left(1-\frac{A}{2\Omega}h\right)^{\frac{t}{h}}\frac{2\Omega t}{2\Omega-Ah}.
$$
\end{enumerate}
\end{ex}

In certain cases one can show that the Euler--Lagrange
extremals are indeed minimizers. In particular,
this is true for the Lagrangian \eqref{eq:pii} under study.
We recall the notion of jointly convex function
(cf., e.g., \cite[Definition~1.6]{book:MT}).

\begin{df}
\label{def:conv}
Function $(t,u,v) \mapsto L(t,u,v)\in C^1\left([a,b]_\mathbb{T}\times\mathbb{R}^{2}; \mathbb{R}\right)$
is jointly convex in $(u,v)$ if
\begin{equation*}
L(t,u+u_0,v+v_0)-L(t,u,v) \geq \partial_2 L(t,u,v)u_0 +\partial_{3}L(t,u,v) v_0
\end{equation*}
for all $(t,u,v)$, $(t,u+u^0,v+v^0) \in [a,b]_\mathbb{T} \times \mathbb{R}^{2}$.
\end{df}

\begin{tw}
\label{global}
Let $(t,u,v) \mapsto L(t,u,v)$ be jointly convex with respect to $(u,v)$
for all $t\in [a,b]_{\mathbb{T}}$. If $\hat{y}$ is a solution to the Euler--Lagrange
equation \eqref{E-L:eq:T}, then $\hat{y}$ is a global minimizer
to \eqref{problem}--\eqref{bcproblem}.
\end{tw}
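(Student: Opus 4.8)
The statement is the standard "convexity upgrades Euler--Lagrange extremals to global minimizers" result, transported to the time-scale setting. The plan is to show directly that $\mathcal{L}(y) - \mathcal{L}(\hat y) \geq 0$ for every admissible $y$. First I would fix an admissible $y$ and write $u_0 := y - \hat y$, $v_0 := y^\Delta - \hat y^\Delta = u_0^\Delta$, so that $u_0 \in C^1_{rd}$ vanishes at the endpoints: $u_0(a) = u_0(b) = 0$. Then
\begin{equation*}
\mathcal{L}(y) - \mathcal{L}(\hat y)
= \int_a^b \Bigl( L(t, \hat y + u_0, \hat y^\Delta + u_0^\Delta) - L(t, \hat y, \hat y^\Delta) \Bigr)\Delta t,
\end{equation*}
and applying the joint convexity inequality of Definition~\ref{def:conv} pointwise (with $u = \hat y(t)$, $v = \hat y^\Delta(t)$) gives
\begin{equation*}
\mathcal{L}(y) - \mathcal{L}(\hat y)
\geq \int_a^b \Bigl( L_y[\hat y](t)\, u_0(t) + L_v[\hat y](t)\, u_0^\Delta(t) \Bigr)\Delta t,
\end{equation*}
using the monotonicity of the $\Delta$-integral (item~8 of Theorem~\ref{intpropdelta}) to preserve the inequality under integration.

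The next step is to show the right-hand side vanishes, which is where the Euler--Lagrange equation enters. By Theorem~\ref{corE-Leq}, $\hat y$ satisfies $L_v[\hat y](t) = \int_a^{\sigma(t)} L_y[\hat y](\tau)\Delta\tau + c$; equivalently, setting $E(t) := \int_a^t L_y[\hat y](\tau)\Delta\tau$, we have $E^\Delta(t) = L_y[\hat y](t)$ and $L_v[\hat y](t) = E^\sigma(t) + c$. I would then rewrite the integrand as
\begin{equation*}
L_y[\hat y](t)\, u_0(t) + L_v[\hat y](t)\, u_0^\Delta(t)
= E^\Delta(t)\, u_0(t) + \bigl(E^\sigma(t) + c\bigr) u_0^\Delta(t),
\end{equation*}
and recognize the first two terms, by the delta product rule (item~3 of Theorem~\ref{tw:differpropdelta}, in the form $(E u_0)^\Delta = E^\Delta u_0 + E^\sigma u_0^\Delta$), as $(E\, u_0)^\Delta(t)$. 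Hence the integrand equals $(E u_0)^\Delta(t) + c\, u_0^\Delta(t) = (E u_0 + c\, u_0)^\Delta(t)$, a total $\Delta$-derivative. Integrating and using the fundamental theorem of calculus on time scales together with $u_0(a) = u_0(b) = 0$ yields
\begin{equation*}
\int_a^b \bigl( L_y[\hat y](t)\, u_0(t) + L_v[\hat y](t)\, u_0^\Delta(t)\bigr)\Delta t
= \bigl[ E(t) u_0(t) + c\, u_0(t) \bigr]_{t=a}^{t=b} = 0.
\end{equation*}
Combining with the convexity estimate gives $\mathcal{L}(y) \geq \mathcal{L}(\hat y)$ for all admissible $y$, i.e., $\hat y$ is a global minimizer.

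I do not anticipate a serious obstacle here; the argument is a clean transcription of the classical proof. The one point requiring a little care is the bookkeeping with $\sigma$ in the integration-by-parts step — one must use the version of the delta product rule that produces $E^\sigma u_0^\Delta$ (not $E\, u_0^\Delta$) so that it matches the $\int_a^{\sigma(t)}$ form of the Euler--Lagrange equation in Theorem~\ref{corE-Leq}; choosing the "wrong" product rule would leave an uncancelled graininess term. A secondary (purely technical) point is that the pointwise convexity inequality must be applied at the admissible pair $(\hat y(t), \hat y^\Delta(t))$ with increment $(u_0(t), u_0^\Delta(t))$, which is legitimate since both $(t,\hat y(t),\hat y^\Delta(t))$ and $(t, y(t), y^\Delta(t))$ lie in $[a,b]_\mathbb{T}\times\mathbb{R}^2$; no regularity beyond $y, \hat y \in C^1_{rd}$ is needed, and the integrability hypotheses on $L$, $L_y$, $L_v$ stated before Theorem~\ref{corE-Leq} ensure every integral written above is well defined.
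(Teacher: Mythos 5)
Your proposal is correct and follows essentially the same route as the paper: apply the joint convexity inequality pointwise to bound $\mathcal{L}(y)-\mathcal{L}(\hat y)$ from below by the first variation, then use integration by parts (with the $\sigma$-corrected product rule) together with the integral form \eqref{E-L:eq:T} of the Euler--Lagrange equation and the vanishing of $y-\hat y$ at the endpoints to show that this lower bound is zero. Your packaging of the integrand as the total $\Delta$-derivative $(E u_0 + c\,u_0)^{\Delta}$ is just a notational variant of the paper's explicit integration-by-parts step.
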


\begin{proof}
Since $L$ is jointly convex with respect to $(u,v)$ for all $t\in [a,b]_{\mathbb{T}}$,
\begin{multline*}
\mathcal{L}(y)-\mathcal{L}(\hat{y})
=\int\limits_{a}^{b}[L(t,y(t),y^{\Delta}(t))-L(t,\hat{y}(t),\hat{y}^{\Delta}(t))]\Delta t\\
\ge\int\limits_{a}^{b}\left[\partial_{2}L(t,\hat{y}(t),\hat{y}^{\Delta}(t))
\cdot(y(t)-\hat{y}(t))+\partial_{3}L(t,\hat{y}(t),\hat{y}^{\Delta}(t))
\cdot(y^{\Delta}(t)-\hat{y}^{\Delta}(t))\right]\Delta t
\end{multline*}
for any admissible path $y$. Let $h(t) := y(t)-\hat{y}(t)$.
Using boundary conditions \eqref{bcproblem}, we obtain that
\begin{equation*}
\begin{split}
\mathcal{L}(y)-\mathcal{L}(\hat{y})
&\ge \int\limits_{a}^{b} h^{\Delta} (t)\left[-\int\limits_{a}^{\sigma(t)}
\partial_{2}L(\tau,\hat{y}(\tau),\hat{y}^{\Delta}(\tau))\Delta \tau
+\partial_{3}L(t,\hat{y}(t),\hat{y}^{\Delta}(t))\right]\Delta t\\
&\qquad +h(t)\int\limits_{a}^{b}\partial_{2}L(t,\hat{y}(t),\hat{y}^{\Delta}(t))\Delta t|_{a}^{b}\\
&=\int\limits_{a}^{b} h^{\Delta} (t)\left[-\int\limits_{a}^{\sigma(t)}
\partial_{2}L(\tau,\hat{y}(\tau),\hat{y}^{\Delta}(\tau))\Delta \tau
+\partial_{3}L(t,\hat{y}(t),\hat{y}^{\Delta}(t))\right]\Delta t.
\end{split}
\end{equation*}
From \eqref{E-L:eq:T} we get
$$
\mathcal{L}(y)-\mathcal{L}(\hat{y})\ge\int\limits_{a}^{b}h^{\Delta}(t) c\Delta t=0
$$
for some $c\in\mathbb{R}$. Hence, $\mathcal{L}(y)-\mathcal{L}(\hat{y})\ge 0$.
\end{proof}

Combining Examples~\ref{ex:16} and \ref{eq:exQNI} and Theorem~\ref{global},
we obtain the central result to be applied in Section~\ref{main:empirical}.

\begin{tw}[Solution to the total social loss problem of the calculus of variations
in the time scale $\mathbb{T} = h\mathbb{Z}$, $h > 0$]
\label{th:delf}
Let us consider the economic problem
\begin{equation}
\label{functional hZ}
\begin{gathered}
\Lambda_{h}(\pi)=\sum\limits_{t=0}^{T-h}\left[
\left(\frac{\pi^{\Delta}(t)}{\beta j}\right)^{2}
+\alpha\left(\frac{\pi^{\Delta}(t)}{j}+\pi(t)\right)^{2}\right]
\left(1-\frac{h \delta}{1+h\delta} \right)^{\frac{t}{h}} h  \longrightarrow \min,\\
\pi(0)=\pi_{0},\quad \pi(T)=\pi_{T},
\end{gathered}
\end{equation}
discussed in Section~\ref{model}
with $\mathbb{T} = h\mathbb{Z}$, $h > 0$, and the delta derivative
given by \eqref{eq:delta:der:h}. More precisely, let $T = N h$
for a certain integer $N > 2 h$, $\alpha, \beta, \delta, \pi_{0}, \pi_{T} \in\mathbb{R}^+$,
and $0 < j \le 1$ be such that $h > 0$ and $1+\alpha\beta^{2}-\alpha\beta^{2}j h\neq 0$.
Let $\Omega$, $A$ and $B$ be given as in \eqref{eq:O:A:B}.
\begin{enumerate}
\item If $A^{2}+4B\Omega > 0$, then
the solution $\hat{\pi}$ to problem \eqref{functional hZ} is given by
\begin{equation}
\label{eq:exp:rt:delf}
\hat{\pi}(t)=C\left(1-\frac{A-\sqrt{A^{2}+4B\Omega}}{2\Omega}h\right)^{\frac{t}{h}}
+(\pi_0 - C)\left(1-\frac{A+\sqrt{A^{2}+4B\Omega}}{2\Omega}h\right)^{\frac{t}{h}},
\end{equation}
$t\in \{0,\ldots, T-2h\}$, where
$$
C := \frac{\pi_T-\pi_0 \left(\frac {2\,\Omega-hA
-h\sqrt{{A}^{2}+4\,B\Omega}}{2\Omega} \right) ^{{\frac {T}{h}}}}{\left(
\frac {2\,\Omega-hA+h\sqrt{{A}^{2}+4\,B\Omega}}{2\Omega} \right)^{{\frac {T}{h}}}
- \left(\frac {2\,\Omega-hA-h\sqrt {{A}^{2}+4\,B\Omega}}{2\Omega} \right)^{{\frac {T}{h}}}}.
$$
\item If $A^{2}+4B\Omega = 0$, then
the solution $\hat{\pi}$ to problem \eqref{functional hZ} is given by
\begin{equation}
\label{eq:exp:rt:delf2}
\hat{\pi}(t)=\left(1-\frac{A}{2\Omega}h\right)^{\frac{t}{h}}\pi_{0}
+\left(1-\frac{A}{2\Omega}h\right)^{\frac{t}{h}}\left[\pi_{T}\left(
\frac{2\Omega}{2\Omega-Ah}\right)^{\frac{T}{h}}-\pi_{0}\right]\frac{t}{T},
\end{equation}
\end{enumerate}
$t\in \{0,\ldots, T-2h\}$.
\end{tw}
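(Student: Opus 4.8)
The plan is to combine the sufficiency result of Theorem~\ref{global} with the explicit solution of the Euler--Lagrange equation already computed in Example~\ref{eq:exQNI}, and then merely fix the two integration constants from the boundary data. First I would record that the Lagrangian \eqref{eq:pii} is jointly convex. On $\mathbb{T}=h\mathbb{Z}$ one has $\ominus\delta=-\delta/(1+h\delta)$, so by Example~\ref{ex:16} the weight $e_{\ominus\delta}(t,0)=\bigl(1-\frac{h\delta}{1+h\delta}\bigr)^{t/h}$ is strictly positive, which confirms that \eqref{functional hZ} is the specialization of \eqref{total:social:loss:scale} to $h\mathbb{Z}$. For each fixed $t$ the bracketed factor of $L$ is a quadratic form in $(u,v)$ whose Hessian has diagonal entry $2\alpha>0$ and determinant $4\alpha/(\beta^{2}j^{2})>0$, hence is positive definite; multiplying by the positive weight preserves this, so $L(t,\cdot,\cdot)$ satisfies Definition~\ref{def:conv}. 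Consequently, by Theorem~\ref{global}, it suffices to produce one solution of the Euler--Lagrange equation \eqref{E-L:eq:T} --- equivalently \eqref{eq:cor} of Corollary~\ref{cor:ThZ} --- meeting \eqref{boun:con}; that function is automatically the global minimizer.

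Next I would invoke Example~\ref{eq:exQNI}: the Euler--Lagrange equation of \eqref{functional hZ} is the regressive constant-coefficient dynamic equation \eqref{eqConstDelta}, with $\Omega,A,B$ as in \eqref{eq:O:A:B} and discriminant $\zeta=(A^{2}+4B\Omega)/\Omega^{2}$; under the standing assumptions the case $\zeta<0$ does not occur, leaving exactly the two cases of the statement. When $A^{2}+4B\Omega>0$, Theorem~\ref{fund sys} together with Theorem~\ref{general sol} and \eqref{exp:in:hZ} give the general solution $\pi(t)=C_{1}(1+\lambda_{1}h)^{t/h}+C_{2}(1+\lambda_{2}h)^{t/h}$, where $1+\lambda_{i}h=\frac{2\Omega-Ah\pm h\sqrt{A^{2}+4B\Omega}}{2\Omega}$. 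When $A^{2}+4B\Omega=0$, Theorem~\ref{fund sys3} with Theorem~\ref{general sol} and Example~\ref{int hZ} (the $\Delta$-integral of $1/(1+p\mu)$ on $h\mathbb{Z}$ being $\frac{2\Omega t}{2\Omega-Ah}$) give $\pi(t)=K_{1}\bigl(1-\frac{A}{2\Omega}h\bigr)^{t/h}+K_{2}\bigl(1-\frac{A}{2\Omega}h\bigr)^{t/h}\frac{2\Omega t}{2\Omega-Ah}$.

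It then remains to fit the boundary data. In the case $\zeta>0$, writing $r_{i}:=1+\lambda_{i}h$, the conditions $\pi(0)=\pi_{0}$, $\pi(T)=\pi_{T}$ give the linear system $C_{1}+C_{2}=\pi_{0}$ and $C_{1}r_{1}^{T/h}+C_{2}r_{2}^{T/h}=\pi_{T}$; with $C:=C_{1}$ this yields $C_{2}=\pi_{0}-C$ and $C=\dfrac{\pi_{T}-\pi_{0}r_{2}^{T/h}}{r_{1}^{T/h}-r_{2}^{T/h}}$, which is precisely the constant $C$ in the statement, so we obtain \eqref{eq:exp:rt:delf}. In the case $\zeta=0$, the condition at $t=0$ forces $K_{1}=\pi_{0}$ at once (the $t$-factor in the second summand vanishes there), and the condition at $t=T$ then gives $K_{2}=\frac{2\Omega-Ah}{2\Omega T}\bigl[\pi_{T}\bigl(\frac{2\Omega}{2\Omega-Ah}\bigr)^{T/h}-\pi_{0}\bigr]$; substituting $K_{1},K_{2}$ back and simplifying with $1-\frac{A}{2\Omega}h=\frac{2\Omega-Ah}{2\Omega}$ collapses the second term to $\bigl[\pi_{T}\bigl(\frac{2\Omega}{2\Omega-Ah}\bigr)^{T/h}-\pi_{0}\bigr]\bigl(1-\frac{A}{2\Omega}h\bigr)^{t/h}\frac{t}{T}$, which is \eqref{eq:exp:rt:delf2}.

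The argument is largely assembly: the analytic heavy lifting (the Euler--Lagrange equation and the solution of the constant-coefficient dynamic equation) is inherited from Example~\ref{eq:exQNI} and the dynamic-equations toolbox of Section~\ref{equations}, and sufficiency is immediate from Theorem~\ref{global}. The genuinely delicate points I expect are (i) checking positive definiteness of the Hessian cleanly enough to apply Definition~\ref{def:conv}, and (ii) the algebra that puts the boundary-determined constants into the compact closed forms of the statement, especially in the degenerate case $\zeta=0$, where the minimizer takes the qualitatively different mixed geometric--linear shape \eqref{eq:exp:rt:delf2}. One should also note explicitly that $r_{1}^{T/h}\neq r_{2}^{T/h}$ (so $C$ is well defined) and that $2\Omega-Ah\neq 0$ in the second case, both of which follow from regressivity of \eqref{E-LeqhZ} together with $A^{2}+4B\Omega>0$, respectively $A^{2}+4B\Omega=0$.
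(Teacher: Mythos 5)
Your proposal is correct and follows essentially the same route as the paper's own proof, which likewise assembles Theorem~\ref{global} (convexity of the Lagrangian) with the Euler--Lagrange solutions of Example~\ref{eq:exQNI}. You simply fill in details the paper leaves implicit --- the Hessian check for joint convexity and the algebra fixing the constants from the boundary data --- so there is no substantive difference in approach.
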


\begin{proof}
From Example~\ref{eq:exQNI}, $\hat{\pi}$ satisfies the Euler--Lagrange equation for problem
\eqref{functional hZ}. Moreover, the Lagrangian of functional $\Lambda_{h}$ of \eqref{functional hZ}
is a convex function because it is the sum of convex functions. Hence, by Theorem~\ref{global},
$\hat{\pi}$ is a global minimizer.
\end{proof}


\subsection{Empirical results}
\label{main:empirical}

We have three forms for the total social loss: continuous \eqref{total_social_loss_con},
discrete  \eqref{total_social_loss_disc}, and on a time scale $\mathbb{T}$ \eqref{total:social:loss:scale}.
Our idea is to compare the implications of one model with those of another using empirical data:
the rate of inflation $p$ from \cite{rateinf} and the rate of unemployment $u$ from \cite{rateunemp},
which were being collected each month in the USA over 11 years, from 2000 to 2010.
We consider the coefficients
$$
\beta := 3, \quad j := \frac{3}{4},\quad \alpha := \frac{1}{2},\quad \delta := \frac{1}{4},
$$
borrowed from \cite{ChiangEDO}. Therefore,
the time-scale total social loss functional for one year is
\begin{equation}
\label{eq2}
\Lambda_{\mathbb{T}}(\pi) = \int\limits_{0}^{11}\left[\frac{16}{9}\left(\pi^{\Delta}(t)\right)^{2}
+\frac{1}{2}\left(\frac{4}{3}\pi^{\Delta}(t)
+\pi(t)\right)^{2}\right]e_{\ominus\frac{1}{4}}(t,0) \Delta t.
\end{equation}
Empirical values $\pi_{E}$ of the expected rate of inflation, $\pi$, for all months in each year,
are calculated using \eqref{inflation} and appropriate values of $p$ and $u$ \cite{rateinf,rateunemp}.
In the sequel, the boundary conditions $\pi(0)$ and $\pi(11)$ will be selected from empirical
data in January and December, respectively.
We shall compare the minimum values of the total social loss functional \eqref{eq2}
obtained from continuous and discrete models and the value for empirical data,
i.e., the value of the discrete functional $\Lambda_{D}(\pi_E) =: \Lambda_{E}$
computed with empirical data $\pi_E$.

In the continuous case we use the Euler--Lagrange equation \eqref{eq:EL:ex25}
with appropriate boundary conditions in order to find the optimal
path that minimizes $\Lambda_{C}$ over each year.
Then, we calculate the optimal values of $\Lambda_{C}$ for each year
(see 2nd column of Table~\ref{tbl:1}). In the 3rd column of Table~\ref{tbl:1}
we collect empirical values of total social loss $\Lambda_{E}$ for each year,
which are obtained by \eqref{total_social_loss_disc} from empirical data.
We find the optimal path that minimizes $\Lambda_{D}$ over each year using
the Euler--Lagrange equation \eqref{eq:EL:ex26}
with appropriate boundary conditions. The optimal values of $\Lambda_{D}$
for each year are given in the 5th column of Table~\ref{tbl:1}.
The paths obtained from the three approaches, using empirical data from 2000,
are presented in Figure~\ref{fig:1}.
\begin{figure}
\begin{center}
\includegraphics[scale=0.4,angle=-90]{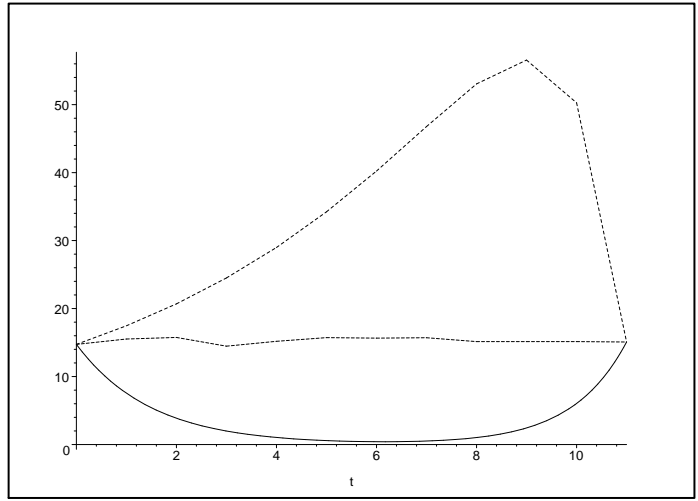}
\end{center}
\caption{\label{fig:1}The expected rate of inflation $\hat{\pi}(t)$
during the year of 2000 in USA, obtained from the classical discrete model \eqref{total_social_loss_disc}
(upper function) and the classical continuous model \eqref{total_social_loss_con}
(lower function), with boundary conditions \eqref{eq:bc:cdm} from January ($t=0$)
and December ($t=11$), together with the empirical rate of inflation
with real data from 2000 \cite{rateinf,rateunemp} (function in the middle).}
\end{figure}
The implications obtained from the three methods in a fixed year are very different,
independently of the year we chose. Table~\ref{tbl:2} shows the relative errors between
$\Lambda_{C}$ and $\Lambda_{E}$ (the 3rd column), $\Lambda_{D}$ and $\Lambda_{E}$
(the 4th column). Our research was motivated by these discrepancies. Why are the results so different?
Is it caused by poor design of the model or maybe by something else?

We focus on the data collection time sampling and consider it as a cause of those differences in the results.
There may exist other reasons, but we examine here the data gathering.
Let us consider our time-scale model in which we consider functional \eqref{eq2}
over a periodic time scale $\mathbb{T}=h\mathbb{Z}$.
In each year we change the time scale by changing $h$,
in such a way that the sum in the functional makes sense,
and we are seeking such value of $h$ for which the absolute error between the minimal
values of the functional \eqref{eq2} and $\Lambda_{E}$ is minimal.
In Table~\ref{tbl:1}, the 6th column presents the values of the most appropriate $h$ and the
4th column the minimal values of the total social loss that correspond to them.
Figure~\ref{fig:2} presents the optimal paths for the continuous, discrete and time-scale models
together with the empirical path, obtained using real data from 2000 \cite{rateinf,rateunemp}.
\begin{figure}
\begin{center}
\includegraphics[scale=0.4,angle=-90]{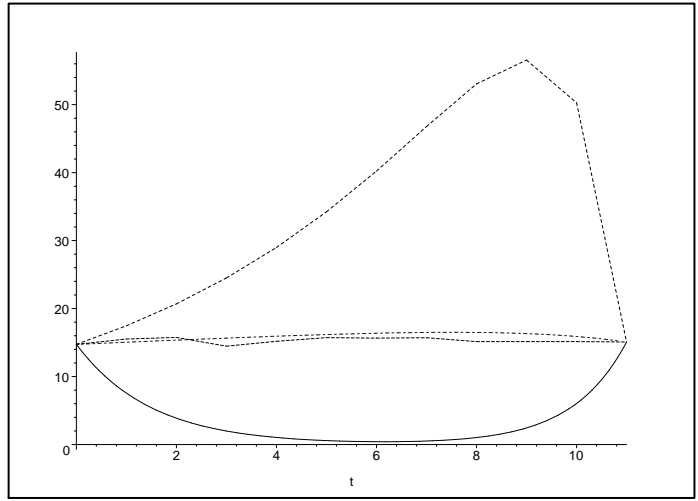}
\end{center}
\caption{\label{fig:2}The three functions of Figure~\ref{fig:1} together with the one
obtained from our time-scale model \eqref{total:social:loss:scale} and Theorem~\ref{th:delf},
illustrating the fact that the expected rate of inflation given by \eqref{eq:exp:rt:delf} with $h = 0,22$
approximates well the empirical rate of inflation.}
\end{figure}
In the 2nd column of Table~\ref{tbl:2} we collect the relative errors between
the minimal values of functional $\Lambda_{E}$ and $\Lambda_{h}$.
\begin{table}[ht]
\begin{center}
\begin{tabular}{|c|c|c|c|c|c|}\hline
year & \multicolumn{4}{c|}{The value of the functional in different approaches} & \ \\ \cline{2-5}
\ &  continuous & empirical & time scales & discrete &  the best $h$  \\ \hline \cline{2-5}
\ & $\Lambda_{C} $ & $\Lambda_{E} $ & $\Lambda_{h} $ & $\Lambda_{D} $ & \   \\ \hline
2000 &37,08888039 & 	457,1493181	 & 487,1508715 & 	2470& 0,22 \\\hline
2001 &52,78839446 & 	522,8060796 & 	536,0298868	 & 3040& 0,11 \\\hline
2002 & 63,88123645 & 	673,399954 & 	663,2573844 & 	3820&  0,11\\\hline
2003 & 62,01139398 & 	811,1909476 & 	853,5383036	 & 4520&  0,2\\\hline
2004 & 61,72908568 & 	703,7663513 & 	699,714732 & 	4130&  0,11\\\hline
2005 & 56,01553586 & 	672,0977499 & 	665,8735854 & 	4060& 0,1 \\\hline
2006 & 45,73885179	 & 592,0374216 & 	594,1793342 & 	3700&  0,1\\\hline
2007 & 53,65457721 & 	505,8743517 & 	511,5351347 & 	2910& 0,1 \\\hline
2008 & 73,4472459	 & 785,9852316 & 	746,8126214 & 	4260&  0,11\\\hline
2009 & 144,2965207 & 	1352,738181 & 	1357,167459 & 	6330&  0,22\\\hline
2010 &153,4630805 & 	1819,572063	 & 1865,77131 & 	11400&  0,1\\\hline
11years & 12,89356177	 &480,5729081	 & 446,1625854	 & 2E+91&  0,11 \\\hline
\end{tabular}
\end{center}
\caption{\label{tbl:1}Comparison of the values of the total social loss functionals in different approaches.}
\end{table}
\begin{table}[ht]
\begin{center}
\begin{tabular}{|c|c|c|c|}\hline
\ & \multicolumn{3}{c|}{Relative error between the empirical value $\Lambda_{E}$ and the result in} \\ \cline{2-4}
year &  time scale $h\mathbb{Z}$ with the best $h$  & continuous approach & discrete classic approach\\\cline{2-4}
\ & $\frac{\Lambda_{h}-\Lambda_{E}}{\Lambda_{E}}$ & $\frac{\Lambda_{C}-\Lambda_{E}}{\Lambda_{E}}$
\ & $\frac{\Lambda_{D}-\Lambda_{E}}{\Lambda_{E}} $\\ \cline{2-4} \hline
2000 & 6,562747053 & 91,88692208 & 440,3048637\\ \hline
2001 &2,529390479 &89,90287288& 481,4775533\\ \hline
2002 & 1,506173195 & 90,51362625& 467,270606\\ \hline
2003 & 5,220393068 & 92,35551208 & 457,2054291 \\ \hline
2004 & 0,575705174 & 91,2287529 & 486,8424929\\ \hline
2005 & 0,926080247 & 91,66556712 & 504,0787967\\ \hline
2006 & 0,361786602 & 92,27433096 & 524,9604949\\ \hline
2007 & 1,119009687 & 89,39369489 & 475,2416564\\\hline
2008 & 4,98388629 & 90,6553911 & 441,9949165\\ \hline
2009 & 0,327430545 & 89,33300451& 367,939775\\ \hline
2010 &2,539017164 & 91,56597952& 526,5209404\\ \hline
11 years & 7,160271027 & 97,31704356 & 4,1617E+90\\ \hline
\end{tabular}
\end{center}
\caption{\label{tbl:2}Relative errors.}
\end{table}


\section{Conclusions}
\label{conclusions}

We introduced a time-scale model to the total social loss over
a certain time interval under study. During examination
of the proposed time-scale model for $\mathbb{T}=h\mathbb{Z}$, $h > 0$,
we changed the graininess parameter. Our goal was to obtain the most similar value
of the total social loss functional $\Lambda_{h}$ to its real value,
i.e., the value from empirical data. We analyzed 11 years
with real data from \cite{rateinf,rateunemp}. With a well-chosen
time scale, we found a small relative error between the real value
of the total social loss and the value obtained
from our time-scale model (see the 2nd column of Table~\ref{tbl:2}).
We conclude that the lack of accurate results
by the classical models arise due to an inappropriate frequency data collection.
Indeed, if one measures the level of inflation and unemployment
about once a week, which is suggested by the values of $h$ obtained
from the time-scale  model, e.g., $h=0,11$ or $h=0,2$ (here $h=1$ corresponds to one month),
the credibility of the results obtained from the classical methods will be much higher.
In other words, similar results to the ones obtained by our time-scale model
can be obtained with the classical models, if a higher frequency of data collection could be used.
In practical terms, however, to collect the levels of inflation and unemployment
on a weekly basis is not realizable, and the calculus of variations
on time scales \cite{Bartos,china-Xuzhou} assumes an important role.


\section*{Acknowledgements}

This work was supported by {\it FEDER} funds through
{\it COMPETE} --- Operational Programme Factors of Competitiveness
(``Programa Operacional Factores de Competitividade'')
and by Portuguese funds through the
{\it Center for Research and Development
in Mathematics and Applications} (University of Aveiro)
and the Portuguese Foundation for Science and Technology
(``FCT --- Funda\c{c}\~{a}o para a Ci\^{e}ncia e a Tecnologia''),
within project PEst-C/MAT/UI4106/2011
with COMPETE number FCOMP-01-0124-FEDER-022690.
Dryl was also supported by FCT through the Ph.D. fellowship
SFRH/BD/51163/2010; Malinowska by Bialystok
University of Technology grant S/WI/02/2011;
and Torres by EU funding under the 7th Framework Programme
FP7-PEOPLE-2010-ITN, grant agreement number 264735-SADCO.

The authors are grateful to two anonymous referees
for valuable suggestions and comments,
which improved the quality of the paper.



\end{document}